\documentclass[english]{article}

\usepackage[utf8]{inputenc}
\usepackage[T1]{fontenc}
\usepackage{microtype}
\usepackage{lmodern}

\usepackage{amsmath,amsthm,thmtools,amssymb,centernot,bm,mathtools,enumitem,upgreek}

\usepackage[hidelinks,hypertexnames=false]{hyperref}
\usepackage[nameinlink]{cleveref}
\usepackage{orcidlink}
\crefname{subsection}{subsection}{subsections}
\usepackage{cite}

%
\usepackage{authblk}
\usepackage[hidelinks]{hyperref}
\title{On some Impedance Boundary Conditions for a Thermo-Piezo-Electromagnetic System}
\author[a]{Andreas\ Buchinger\orcidlink{0009-0004-4203-5874}}
\author[b]{Michael\ Doherty\orcidlink{0009-0003-0349-4237}\thanks{Corresponding author: \href{mailto:m.t.doherty@outlook.com}{m.doherty@strath.ac.uk}}} 
\affil[a]{Institute for Applied Analysis, TU Bergakademie Freiberg, Germany}
\affil[b]{Department of Mathematics and Statistics, University of Strathclyde, Glasgow, UK}

\date{}


\declaretheorem[name=Definition,style=definition,numberwithin=section,qed=$\lozenge$]{definition}
\declaretheorem[name=Remark,style=remark,numberlike=definition,qed=$\triangledown$]{remark}

\declaretheorem[name=Lemma,numberlike=definition]{lemma}

\declaretheorem[name=Theorem,numberlike=definition]{theorem}

\declaretheorem[name=Picard's Theorem,numbered=no]{pictheorem}

\newcommand{\integral}[4]{\int_{#1}^{#2} #3 \operatorname{d}\! #4}
\newcommand{\iprod}[3][]{\langle#2,#3\rangle_{#1}}
\newcommand{\norm}[2][]{\left\lVert#2\right\rVert_{#1}}
\newcommand{\Ccinf}{C_{c}^{\infty}}

\DeclareMathOperator{\BD}{BD}
\DeclareMathOperator{\Real}{Re}
\DeclareMathOperator{\dom}{dom}
\DeclareMathOperator{\spt}{spt}
\DeclareMathOperator{\curll}{curl}
\DeclareMathOperator{\gradd}{grad}
\DeclareMathOperator{\divv}{div}
\DeclareMathOperator{\ran}{ran}
\DeclareMathOperator{\symm}{sym}
\DeclareMathOperator{\Divv}{Div}
\DeclareMathOperator{\Gradd}{Grad}

\newcommand{\N}{\mathbb{N}}
\newcommand{\R}{\mathbb{R}}
\newcommand{\C}{\mathbb{C}}
\newcommand{\FF}{\mathcal{F}}
\newcommand{\LL}{\mathcal{L}}
\newcommand{\HH}{\mathcal{H}}
\newcommand{\NN}{\mathcal{N}}
\newcommand{\MM}{\mathcal{M}}
\newcommand{\KK}{\mathcal{K}}
\newcommand{\K}{\mathbb{K}}
\newcommand{\euler}{\mathrm{e}}
\newcommand{\iu}{\mathrm{i}}
\newcommand{\multm}{\mathrm{m}}

\begin{document}
\maketitle

{\centering\footnotesize This paper is dedicated to Rainer Picard,\\without whom there would be no evolutionary\\ equation perspective to work with.\par}

\begin{abstract}
     Based on a combination of insights afforded by Rainer Picard in \cite{Picard_2017} and Serge Nicaise in \cite{Nicaise}, we extend a set of abstract piezo-electromagnetic impedance boundary conditions. We achieve this by accommodating for the influence of heat with the inclusion of a new equation and additional boundary terms. We prove the evolutionary well-posedness of a known thermo-piezo-electromagnetic system under these boundary conditions. Evolutionary well-posedness here means unique solvability as well as continuous and causal dependence on given data.
\end{abstract}
\section{Introduction}

Ultrasonic transducers are measurement devices which enjoy frequent application across a range of different fields including medical imaging and non-destructive testing.
%
%
%
%
%
%
%
%
Most mathematical models of these devices focus on their piezo-electromagnetic properties, with the impact of a high-temperature regime often being neglected.
%
%
%
%
Issues in the manufacturing and testing processes can account for this. Such physical considerations motivate the use of - and need for - abstract mathematical modelling approaches.
The focus of this paper is one such abstract approach. 

We will formulate a thermo-piezo-electromagnetic model (which could be used to model an ultrasonic device) and consider its well-posedness when considered together with a set of impedance boundary conditions for full thermo-piezo-electromagnetic data.
To that end, we recall the following impedance boundary conditions. In their original formulation (cf.\ \cite{Nicaise}) the following piezo-electromagnetic (Leontovich) boundary conditions
\begin{align}\label{NicOr}
\begin{split}
    n\times H-n\times {\widetilde{Q}}^*v+n\times(E\times n)&=0\,\,\text{on}\,\,\partial\Omega,
    \\
        T\cdot n-{\widetilde{Q}}\left(n\times E\right)+\left(1+{\widetilde{\alpha}}\partial_t^{-1}\right)v&=0\,\,\text{on}\,\,\partial\Omega
        ,
        \end{split}
\end{align}
were considered together with a piezo-electromagnetic system without any thermal input.
Here $u,E,H\colon\mathbb{R}\times\Omega\to\mathbb{R}^{3}$ are the displacement of the elastic body $\Omega$, the electric field and the magnetic field, respectively.
Moreover, $T\colon\mathbb{R}\times\Omega\to\mathbb{K}_{\symm}^{3\times3}$ is the stress tensor taking values in symmetric $3\times3$ matrices and $v\coloneqq\partial_{t}u$.
We also have ${\widetilde{Q}}$ and ${\widetilde{\alpha}}$ as given (bounded and linear) boundary mappings with
\begin{equation*}
\begin{split}
    {\widetilde{Q}}\colon V_{\gamma_t}\to H^{1/2}\left(\partial\Omega\right)^{3}
    \end{split}
\quad\text{and}\quad
\begin{split}
    {\widetilde{\alpha}}\colon H^{1/2}\left(\partial\Omega\right)^{3}\to H^{1/2}\left(\partial\Omega\right)^{3}
\end{split}.
\end{equation*}
The boundary traces and spaces $V_{\gamma_t}$ and $H^{1/2}\left(\partial\Omega\right)^{3}$ are later recalled here in \Cref{secDOClTr}.
Specific regularity assumptions are made in \cite{Nicaise} which ensure that the boundary equations \eqref{NicOr} are well-defined as equations on $L^{2}\left(\partial\Omega\right)$. 
These boundary conditions were later generalised to the setting of \emph{abstract boundary data spaces} (cf.\ \cite{Picard_2017}), 
and it is this generalisation which we take as the starting point for the formulation of our own impedance boundary conditions 
(we recall abstract boundary data spaces here in \Cref{Abstract Boundary Data Spaces}). 
We will obtain our new boundary conditions after suitably extending the above boundary equations to allow for the influence of a high-temperature regime (\Cref{Formulating New Boundary Conditions}).
%
%
Whilst 
the set of newly formed boundary conditions is useful to us as an example, we highlight that they are abstract in nature. The task of finding and formulating a physically relevant set of boundary conditions -- which fits within the schema of these new boundary conditions -- remains an avenue of future research.

The basis for our extended model is a thermo-piezo-electromagnetic system (cf.\ \cite{AJM_PTW}) which was originally shown to be well-posed as an \emph{evolutionary equation} (cf.\  \cite{Picard_2009} or the more recent \cite{ISem23}) under the influence of homogeneous Dirichlet and Neumann boundary conditions. We recall the main components of this system in \Cref{The Underlying System Equations} as well as at the beginning of \Cref{Our Model for Thermo-Piezo-Electromagnetism with Boundary Dynamics}.
Well-posedness in this context means Hadamard well-posedness and causal dependence on given data, which we will review first in \Cref{Evolutionary Equations and Hilbert Spaces}.
We will extend this system in such a way as to be able to accommodate for the novel impedance boundary conditions formulated. This will be achieved primarily by applying the methodology used in \cite{Picard_2017}.
We will focus on addressing whether our extended system is well-posed as an evolutionary equation under our new boundary conditions. 
A proof of the evolutionary well-posedness of the system is presented in \Cref{Evolutionary Well-Posedness of the Model}, with our main solution result, \Cref{model_well-posedness}.
\section{Preliminaries}\label{Preliminaries}
\subsection{Evolutionary Equations}\label{Evolutionary Equations and Hilbert Spaces}
First, we introduce some notation and definitions based on~\cite{ISem23}.
Let $H$ be a complex Hilbert space (linear in the second argument) and let $C\in L(H)$.
We say that $C$ is {\em positive-definite} iff 
\begin{equation*}
\forall x\in H: \iprod[H]{x}{(C + C^\ast)x}\geq 2c_0\norm[H]{x}^2
\end{equation*}
for some $c_0\in\R_{>0}$. We can rephrase this requirement as $\Real C\geq c_0$.
If ever we are not concerned with the actual value of $c_0\in\R_{>0}$ we shall instead write $C\gg 0$.

For an open $U\subseteq \C$, we call a holomorphic $M\colon U\to L(H)$ a {\em material law} iff there exists a
$\nu\in\R$ with
$\C_{\Real >\nu}\subseteq U$ and
\begin{equation*}
\sup_{z\in \C_{\Real >\nu}}\norm{M(z)} <\infty\text{.}
\end{equation*}
In that case, $s_b\left(M\right)$ denotes the infimum of all such $\nu$. Considering the Hilbert space
\begin{equation*}
L_{2,\nu}\left(\R;H\right)\coloneqq \left\{f\colon \R\to H\text{ Bochner-meas.; }
\integral{\R}{}{\norm[H]{f(t)}^2\euler^{-2\nu t}}{t} <\infty\right\}\text{,}
\end{equation*}
we define the weak derivative
$\partial_{t,\nu}\colon \dom (\partial_{t,\nu})\subseteq L_{2,\nu}\left(\R;H\right) \to
L_{2,\nu}\left(\R;H\right)$ in the classical way
\begin{equation*}
(f,g)\in \partial_{t,\nu} :\Longleftrightarrow \forall \varphi\in \Ccinf (\R):
-\integral{\R}{}{\varphi^\prime(t) f(t)}{t}=\integral{\R}{}{\varphi(t)g(t)}{t}\text{,}
\end{equation*}
and the (unitary) {\em Fourier--Laplace transform}
\begin{equation*}
\LL_\nu\coloneqq \FF\exp(-\nu\multm)\colon L_{2,\nu}\left(\R;H\right)\to L_{2}\left(\R;H\right)\text{,}
\end{equation*}
where $\FF$ is the classical (unitary) Fourier transform on $L_{2}\left(\R;H\right)$ and
\begin{equation*}
\exp(-\nu\multm)\colon
\begin{cases}
\hfill L_{2,\nu}(\R;H)&\to\quad L_{2}(\R;H)\\
\hfill f&\mapsto\quad [t\mapsto \exp(-\nu t)f(t)]
\end{cases}\text{.}
\end{equation*}
For a material law $M$, a $\nu > s_b\left(M\right)$ and
\begin{equation*}
M(\iu\multm +\nu)\colon
\begin{cases}
\hfill L_{2}(\R;H)&\to\quad L_{2}(\R;H)\\
\hfill f&\mapsto\quad [t\mapsto M(\iu t+\nu)f(t)]
\end{cases}\text{,}
\end{equation*}
we call
\begin{equation*}
M(\partial_{t,\nu})\coloneqq \LL_\nu^\ast M(\iu\multm +\nu) \LL_\nu\in L(L_{2,\nu}(\R;H))
\end{equation*}
the associated {\em material (law) operator}.

For a densely defined and closed operator $A\colon\dom (A)\subseteq H\to H$, the graph inner product makes $\dom (A)$ a Hilbert space
and basic calculations show that the operator
\begin{equation}\label{eqExtendedA}
\begin{cases}
\hfill L_{2,\nu}(\R;\dom(A))\subseteq L_{2,\nu}(\R;H)&\to\quad L_{2,\nu}(\R;H)\\
\hfill f&\mapsto\quad [t\mapsto Af(t)]
\end{cases}
\end{equation}
is well-defined, densely defined and closed. Using a mollifying argument,
we can easily see that~\labelcref{eqExtendedA} is skew-selfadjoint for a skew-selfadjoint $A$.
Hence, we will not distinguish between $A$ and its extension~\labelcref{eqExtendedA}.

With these tools, we can define {\em evolutionary equations} as
\begin{equation}\label{eqEvolEq}
\left(\partial_{t,\nu}M(\partial_{t,\nu})+A\right)U=F\text{.}
\end{equation}
The solution theory for the class of these equations is encapsulated in the
following~\cite[Theorem~6.2.1]{ISem23}.
\begin{pictheorem}
\hypertarget{Picard}{}
Let $\nu_0\in\R$ and $H$ be a Hilbert space, let $M\colon\dom(M)\subseteq\C\to L(H)$ be a material law with
$s_b(M)\leq\nu_0$ and let $A\colon\dom (A)\subseteq H\to H$ be skew-selfadjoint.
Assume that there exists a constant $c >0$
such that
\begin{equation*}
\Real zM(z)\geq c
\end{equation*}
for all $z\in \C_{\Real >\nu_0}$. Then for all $\nu\geq\nu_0$ the operator
$\partial_{t,\nu}M(\partial_{t,\nu})+A$ is closable and 
\begin{equation*}
S_\nu:=\left(\overline{\partial_{t,\nu}M(\partial_{t,\nu})+A}\right)^{-1}
\in L\left(L_{2,\nu}(\R;H)\right)\text{.}
\end{equation*}
Moreover, $S_\nu$ is causal, i.e.\ for $F\in L_{2,\nu}(\R;H)$ and $a\in \R$
\begin{equation*}
\spt F\subseteq [a,\infty)\implies \spt S_\nu F\subseteq [a,\infty)\text{,}
\end{equation*}
and $S_\nu$ satisfies $\norm{S_{\nu}}\leq\frac{1}{c}$. For all
$F\in\dom(\partial_{t,\nu})$ we have
\begin{equation*}
S_{\nu}F\in\dom(\partial_{t,\nu})\cap\dom(A)\text{,}
\end{equation*}
i.e.\ $U\coloneqq S_{\nu}F$ solves the evolutionary equation in the sense of~\labelcref{eqEvolEq}.
Furthermore, for $\eta,\nu\geq\nu_0$ and $F\in L_{2,\nu}(\R;H)\cap L_{2,\eta}(\R;H)$ we have
$S_{\nu}F=S_{\eta}F$.
\end{pictheorem}

Finally, we recall three useful results which we will use in the sequel.
The first can be found as~\cite[Theorem~6.2.3~(b)]{ISem23}. The second can be found as~\cite[Lemma~3.2]{projPicTrosWau},
whereas the third can be found as~\cite[Theorem~5.2.3]{ISem23}.
\begin{lemma}\label{6.2.3_b}
Let $a\in L\left(H\right)$ and $c\in\R_{>0}$. Assume $\Real a\geq c$. Then $a^{-1}\in L\left(H\right)$
with $\norm{a^{-1}}\leq\frac{1}{c}$ and $\Real a^{-1}\geq c\norm{a}^{-2}$.
\end{lemma}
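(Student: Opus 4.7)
The plan is to deduce everything from a single coercivity estimate obtained by polarisation, and then to obtain the real-part bound on $a^{-1}$ via the substitution $x = a^{-1}y$. There is no real obstacle here; the only subtlety worth flagging is confirming surjectivity of $a$ (not merely injectivity with closed range), which will rely on the hypothesis being symmetric in $a$ and $a^*$.

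First I would note that $\Real a \geq c$ gives, for every $x \in H$,
\[
\Real\iprod[H]{x}{ax} = \tfrac{1}{2}\iprod[H]{x}{(a+a^*)x} \geq c\norm[H]{x}^2,
\]
so Cauchy--Schwarz delivers $\norm[H]{ax} \geq c\norm[H]{x}$, making $a$ injective with closed range. Since $\Real a^* = \Real a \geq c$, the identical estimate applies to $a^*$; hence $\ker a^* = \{0\}$ and $\ran a = (\ker a^*)^\perp = H$. The open mapping theorem then yields $a^{-1} \in L(H)$, and applying the coercivity inequality to $x = a^{-1}y$ supplies the bound $\norm{a^{-1}} \leq 1/c$.

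For the final inequality, fix $y \in H$ and set $x \coloneqq a^{-1}y$. Since $\iprod[H]{ax}{x}$ and $\iprod[H]{x}{ax}$ are complex conjugates, their real parts coincide, so
\[
\Real\iprod[H]{y}{a^{-1}y} = \Real\iprod[H]{x}{ax} \geq c\norm[H]{x}^2 \geq c\norm{a}^{-2}\norm[H]{y}^2,
\]
where the last step uses $\norm[H]{y} = \norm[H]{ax} \leq \norm{a}\,\norm[H]{x}$. This is exactly $\Real a^{-1} \geq c\norm{a}^{-2}$, concluding the argument.
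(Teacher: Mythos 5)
Your proof is correct. The paper does not provide its own proof of this lemma; it is quoted verbatim from [ISem23, Theorem~6.2.3~(b)]. Your argument --- the coercivity estimate $\Real\iprod[H]{x}{ax}\geq c\norm[H]{x}^2$, the symmetric lower bound for $a^*$ to get surjectivity via $\ran a=(\ker a^*)^\perp=H$, and the substitution $x=a^{-1}y$ to transfer the real-part bound --- is exactly the standard route taken in the reference, and you correctly flag the one place where care is needed (surjectivity, not just bounded-below injectivity). One small stylistic note: you don't actually need the open mapping theorem once you have bijectivity, since the inequality $\norm[H]{ax}\geq c\norm[H]{x}$ already gives $\norm{a^{-1}}\leq 1/c$ directly, as you observe in the very next clause.
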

\begin{lemma}\label{ISem_11.3}
Let $H$ be a Hilbert space and $V\subseteq H$ be a closed subspace. Let
\begin{equation*}
\iota_V\colon
\begin{cases}
\hfill V&\to\quad H\\
\hfill x&\mapsto\quad x
\end{cases}
\end{equation*}
denote the canonical embedding of $V$ into $H$. Then, $\iota_V\iota_V^*\colon H\to H$ is the orthogonal projection on $V$ and $\iota_V^*\iota_V\colon V\to V$ is the identity on $V$.
\end{lemma}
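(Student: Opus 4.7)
The plan is to unpack the defining relation of the adjoint $\iota_V^*\colon H\to V$ and combine it with the orthogonal decomposition $H=V\oplus V^\perp$, which is available because $V$ is closed. First, I would note that $\iota_V^*$ is characterised by
\begin{equation*}
\iprod[V]{y}{\iota_V^* x} = \iprod[H]{\iota_V y}{x} = \iprod[H]{y}{x}
\quad (y\in V,\,x\in H),
\end{equation*}
where the last equality uses that $\iota_V$ is the set-theoretic inclusion and that the inner product on $V$ is inherited from $H$.

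Given $x\in H$, I would then write $x=x_V+x_\perp$ with $x_V\in V$ and $x_\perp\in V^\perp$. For every $y\in V$ we have
\begin{equation*}
\iprod[V]{y}{\iota_V^* x} = \iprod[H]{y}{x_V+x_\perp} = \iprod[H]{y}{x_V} = \iprod[V]{y}{x_V},
\end{equation*}
and the Riesz representation theorem inside $V$ forces $\iota_V^* x = x_V$. Applying $\iota_V$ to both sides yields $\iota_V\iota_V^* x = x_V$, which is precisely the orthogonal projection of $x$ onto $V$ viewed as an operator on $H$.

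For the second assertion, I would specialise to $y\in V$: then the orthogonal decomposition of $y$ is trivial with $y_V=y$, so the previous computation yields $\iota_V^*\iota_V y = \iota_V^* y = y$, i.e.\ $\iota_V^*\iota_V$ is the identity on $V$. I do not anticipate any real obstacle beyond careful bookkeeping between the inner products on $V$ and on $H$; the whole argument rests on the adjoint identity together with the orthogonal decomposition guaranteed by the closedness of $V$.
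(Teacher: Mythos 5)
Your proof is correct and complete. Note that the paper itself does not prove this lemma---it is cited from the literature (Lemma~3.2 in the reference \cite{projPicTrosWau})---so there is no in-paper argument to compare against; your route via the defining adjoint identity and the orthogonal decomposition $H = V \oplus V^\perp$ (available because $V$ is closed) is the standard one, and your bookkeeping between $\iprod[V]{\cdot}{\cdot}$ and $\iprod[H]{\cdot}{\cdot}$ is handled carefully.
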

\begin{lemma}\label{lemmaTDMultOpLapTrafo}
    For $\nu\in\R$, we have $\partial_{t,\nu}= \LL_\nu^\ast (\iu\multm +\nu) \LL_\nu$.
\end{lemma}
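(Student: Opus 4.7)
The plan is to reduce the identity to the classical Fourier transform identity $\partial_t = \FF^\ast (\iu\multm)\FF$ on $L_{2}(\R;H)$ via a similarity transform by $\exp(-\nu\multm)$. The key observation is an elementary product-rule computation: for $\varphi\in\Ccinf(\R;H)$ (or more generally any $f\in\dom(\partial_{t,\nu})$), the function $\exp(-\nu\multm)f$ lies in $\dom(\partial_{t})$ with
\begin{equation*}
\partial_t\bigl(\exp(-\nu\multm)f\bigr) = \exp(-\nu\multm)\bigl(\partial_{t,\nu}f-\nu f\bigr)\text{.}
\end{equation*}
I would first verify this on $\Ccinf(\R;H)$ by a direct computation, then extend to all of $\dom(\partial_{t,\nu})$ by a density/closure argument using that $\Ccinf(\R;H)$ is a core of $\partial_{t,\nu}$ (standard mollification).

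Rearranging, the above identity reads
\begin{equation*}
\partial_{t,\nu} = \exp(\nu\multm)\,\partial_t\,\exp(-\nu\multm)+\nu\text{,}
\end{equation*}
where I use that the multiplication operator $\exp(-\nu\multm)\colon L_{2,\nu}(\R;H)\to L_{2}(\R;H)$ is unitary with inverse $\exp(\nu\multm)$ (hence also adjoint). Combined with the classical result $\partial_{t}=\FF^\ast(\iu\multm)\FF$ on $L_{2}(\R;H)$, and using the factorisation $\LL_\nu=\FF\exp(-\nu\multm)$, so that $\LL_\nu^\ast=\exp(\nu\multm)\FF^\ast$, one obtains
\begin{equation*}
\partial_{t,\nu}=\LL_\nu^\ast(\iu\multm)\LL_\nu+\nu\text{.}
\end{equation*}

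Finally, because $\LL_\nu$ is unitary, $\LL_\nu^\ast\LL_\nu$ is the identity on $L_{2,\nu}(\R;H)$, so the scalar $\nu$ may be absorbed into the bracket, yielding
\begin{equation*}
\partial_{t,\nu}=\LL_\nu^\ast(\iu\multm+\nu)\LL_\nu
\end{equation*}
as required, where equality holds as unbounded operators (with identical domains, both being the image of $\dom(\iu\multm)\subseteq L_2(\R;H)$ under $\LL_\nu^\ast$).

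The main obstacle is a bookkeeping one rather than a conceptual one: one must be careful that equality of unbounded operators is proved, not merely equality of actions on a dense subspace. I would handle this by first establishing the identity on the core $\Ccinf(\R;H)$, then noting that the right-hand side is automatically closed (as the similarity transform of the closed operator $\iu\multm+\nu$ by the unitary $\LL_\nu$) and that $\partial_{t,\nu}$ is closed by definition; since both closed operators agree on a common core, they coincide.
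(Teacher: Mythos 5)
Your plan follows the standard route (indeed, the same route as the cited reference) of conjugating the classical weak derivative $\partial_t = \FF^\ast(\iu\multm)\FF$ by the unitary $\exp(-\nu\multm)$, and the product-rule identity you write down is exactly the right key step. There is, however, one genuine gap: as stated, your product rule is only one-directional. You show that $f\in\dom(\partial_{t,\nu})$ implies $\exp(-\nu\multm)f\in\dom(\partial_t)$ with the given formula, which yields only the \emph{graph inclusion}
\begin{equation*}
\partial_{t,\nu}\subseteq\exp(\nu\multm)\,\partial_t\,\exp(-\nu\multm)+\nu=\LL_\nu^\ast(\iu\multm+\nu)\LL_\nu\text{,}
\end{equation*}
not equality. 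Your final closure-and-core argument does not close this gap: concluding that two closed operators coincide from agreement on a dense subspace requires that subspace to be a core of \emph{both} operators, but you have only argued (via mollification) that $\Ccinf(\R;H)$ is a core of $\partial_{t,\nu}$; establishing that it is also a core of $\LL_\nu^\ast(\iu\multm+\nu)\LL_\nu$ is equivalent to showing that $\LL_\nu\Ccinf(\R;H)$ is a core of $\iu\multm$, which is neither immediate nor asserted. Agreement on a core of only the left-hand side plus closedness of the right-hand side gives precisely the inclusion above and nothing more.

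The cleanest fix is to state and prove the product rule as an equivalence: for $f\in L_{2,\nu}(\R;H)$ and $g\coloneqq\exp(-\nu\multm)f\in L_2(\R;H)$, one has $f\in\dom(\partial_{t,\nu})$ \emph{if and only if} $g\in\dom(\partial_t)$, and then $\partial_t g=\exp(-\nu\multm)(\partial_{t,\nu}f-\nu f)$. The converse direction is the same distributional product-rule computation, now applied to $\exp(\nu\multm)$: if $g,\partial_t g\in L_2(\R;H)$, then the distributional derivative of $f=\exp(\nu\multm)g$ is $\exp(\nu\multm)(\nu g+\partial_t g)\in L_{2,\nu}(\R;H)$, so $f\in\dom(\partial_{t,\nu})$. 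With the equivalence in hand, the map $\exp(-\nu\multm)$ is an honest bijection between $\dom(\partial_{t,\nu})$ and $\dom(\partial_t)$ intertwining the operators, so the claimed equality of unbounded operators (including domains) follows directly from $\partial_t=\FF^\ast(\iu\multm)\FF$ and the unitarity of $\LL_\nu$, with no separate density or core argument needed.
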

\subsection{Differential Operators and Classical Trace Spaces}\label{secDOClTr}
We now turn our attention to recalling the standard classical traces and associated spaces.  In the following let $\Omega\subseteq\R^d$ be a bounded Lipschitz domain for $d\in\N$.
We will denote the usual continuous ({\em Dirichlet}) trace by $\gamma\colon H^1(\Omega)\to H^{1/2}(\partial \Omega)$, where $H^{1/2}(\partial \Omega)$ stands for
$\ran (\gamma)\subseteq L_2(\partial\Omega)$ considered as the Hilbert space $H^1(\Omega)/\ker\gamma$.

Next, we define the weak divergence $\divv\colon \dom (\divv)\subseteq L_2(\Omega)^d\to L_2(\Omega)$ as
\begin{equation*}
(f,g)\in \divv :\Longleftrightarrow \forall \varphi\in \Ccinf (\Omega):
-\integral{\Omega}{}{f(x)\cdot\gradd\varphi(x)}{x}=\integral{\Omega}{}{g(x)\varphi(x)}{x}\text{,}
\end{equation*}
and write $H(\divv,\Omega)$ for the Hilbert space $\dom (\divv)$ endowed with the graph inner product. Following~\cite{WS_max_scatt} and
applying $\gamma$ to each component, we define the continuous and linear
{\em Neumann trace} as
\begin{equation*}
\gamma_{\cdot n}\colon
\begin{cases}
\hfill H^1(\Omega)^d&\to\quad L_{2}(\partial\Omega)\\
\hfill U&\mapsto\quad (\gamma U)\cdot n
\end{cases}\text{,}
\end{equation*}
where $n$ denotes the outer unit normal.
Writing $H^{-1/2}(\partial \Omega)\coloneqq (H^{1/2}(\partial \Omega))^{\prime}$
and using (cf.~\cite[p.~3743]{WS_max_scatt})
\begin{equation*}
    \norm[H^{-1/2}(\partial \Omega)]{\gamma_{\cdot n}(U)}\leq
    C\norm[H(\divv,\Omega)]{U}
\end{equation*}
for $U$ from the dense subset $H^1(\Omega)^d\subseteq H(\divv,\Omega)$,
we can uniquely extend $\gamma_{\cdot n}$ to the continuous and linear function
\begin{equation*}
\gamma_{\cdot n}\colon H(\divv,\Omega) \to H^{-1/2}(\partial \Omega)
\end{equation*}
with (integration by parts)
\begin{equation}
\gamma_{\cdot n}(U)(\gamma u)=\iprod[L_2(\Omega)]{\divv U}{u}+\iprod[L_2(\Omega)^d]{U}{\gradd u}\label{eqNeumannIntByParts}
\end{equation}
for $U\in H(\divv,\Omega)$ and $u\in H^1(\Omega)$.
For every $F\in H^{-1/2}(\partial \Omega)$, the Riesz Representation Theorem yields a unique $u\in H^1(\Omega)$ with
\begin{equation}
\iprod[L_2(\Omega)]{u}{v}+\iprod[L_2(\Omega)^d]{\gradd u}{\gradd v}=F(\gamma v)
\label{eqNeumannOntoRieszRep}
\end{equation}
for all $v\in H^1(\Omega)$.
Choosing $v\in \Ccinf (\Omega)$, we obtain
$\gradd u\in H(\divv,\Omega)$ with $\divv (\gradd u)=u$
. With~\labelcref{eqNeumannIntByParts} we obtain
$\gamma_{\cdot n}(\gradd u)=F$, which shows that the Neumann trace is in fact onto.

For our purposes, we will need special higher dimensional
versions of these differential operators and of their traces.
With (the index $\symm$ here stands for symmetric matrices)
\begin{equation}\label{definSymmGrad}
\begin{cases}
\hfill \Ccinf(\Omega)^d\subseteq L_2(\Omega)^d&
\to\quad L_{2}(\Omega)^{d\times d}_{\symm}\\
\hfill (\varphi_j)^d_{j=1}&\mapsto\quad
\frac{1}{2}(\partial_k \varphi_j+ \partial_j \varphi_k)^d_{k,j=1}
\end{cases}
\end{equation}
and
\begin{equation}\label{definSymmDiv}
\begin{cases}
\hfill \Ccinf(\Omega)^{d\times d}_{\symm}\subseteq L_2(\Omega)^{d\times d}_{\symm}&
\to\quad L_{2}(\Omega)^d\\
\hfill (\varphi_{jk})^d_{j,k=1}&\mapsto\quad
\left(\sum_{k=1}^d\partial_k \varphi_{jk}
\right)^d_{j=1}
\end{cases}\text{,}
\end{equation}
we define the weak {\em symmetrized gradient}, $\Gradd$, as
the negative adjoint of~\labelcref{definSymmDiv}
and the weak {\em symmetrized divergence}, $\Divv$, as the
negative adjoint of~\labelcref{definSymmGrad}. Once again,
we write $H(\Gradd,\Omega)$ and $H(\Divv,\Omega)$ for
the respective domains endowed with the respective
graph inner products that make them Hilbert spaces.
Using methods based on Korn's second inequality (cf.\ \Cref{remKornAndGradDomIsH1}),
we obtain $H(\Gradd,\Omega)\simeq
H^1(\Omega)^d$ in the sense that the sets coincide
and that the norms are equivalent. Hence,
applying the Dirchlet trace to every component yields
the linear, continuous and onto ($d$-dimensional) Dirchlet trace
\begin{equation*}
\gamma\colon H(\Gradd,\Omega)\to H^{1/2}(\partial \Omega)^d\text{.}
\end{equation*}
For $\Divv$, we easily obtain
\begin{equation*}
H(\Divv,\Omega)\simeq H(\divv,\Omega)^d\cap L_2(\Omega)^{d\times d}_{\symm}\subseteq H(\divv,\Omega)^d    
\end{equation*}
and
\begin{equation*}
\Divv \left((f_{jk})^d_{j,k=1}\right)= \left(\divv\left((f_{jk})^d_{k=1}\right)\right)^d_{j=1}\text{.}
\end{equation*}
Hence, applying the Neumann trace to every component yields the continuous
and linear ($d$-dimensional) Neumann trace
\begin{equation*}
\gamma_{\cdot n}\colon H(\Divv,\Omega) \to H^{-1/2}(\partial \Omega)^d\simeq
\left(H^{1/2}(\partial \Omega)^d\right)^{\prime}\text{,}
\end{equation*}
and thus~\labelcref{eqNeumannIntByParts} turns into
\begin{equation}
\sum_{j=1}^d\gamma_{\cdot n}(U_j)(\gamma u_j)\label{eqNeumannIntByPartsSymm}
=\iprod[L_2(\Omega)^d]{\Divv U}{u}+\iprod[L_2(\Omega)^{d\times d}_{\symm}]{U}{\Gradd u}
\end{equation}
for $U\in H(\Divv,\Omega)$ and $u\in H(\Gradd,\Omega)$. An argument
similar to~\labelcref{eqNeumannOntoRieszRep} proves that the Neumann trace
is even onto.

In the case $d=3$, we define the weak $\curll\colon \dom (\curll)\subseteq L_2(\Omega)^3\to L_2(\Omega)^3$ as
\begin{equation*}
(f,g)\in \curll :\Longleftrightarrow \forall \varphi\in \Ccinf (\Omega)^3:
\integral{\Omega}{}{f(x)\cdot\curll\varphi(x)}{x}=\integral{\Omega}{}{g(x)\cdot\varphi(x)}{x}\text{,}
\end{equation*}
and write $H(\curll,\Omega)$ for the Hilbert space $\dom (\curll)$ endowed with the graph inner product.
We recall the following (classical) traces and associated spaces for $H(\curll,\Omega)$. These were originally discussed
in~\cite{BSC} and later considered in~\cite[Section~4]{WS_max_scatt} and~\cite[Definition~2.15 and Remark~2.16]{abs_grad_div}. The closed subspace
\begin{equation*}
    L^{t}_{2}(\partial\Omega)\coloneqq\{f\in L_{2}(\partial\Omega)^{3}:f\cdot n=0\}
\end{equation*}
of $L_{2}(\partial\Omega)^{3}$
is called the space of {\em tangential vector-fields}
on the boundary.
We define the continuous and linear \emph{tangential-components trace}
\begin{align*}
\pi_{t}\colon&
\begin{cases}
\hfill H^1(\Omega)^3&\to\quad L^{t}_{2}(\partial\Omega)\\
\hfill U&\mapsto\quad -n\times (n\times\gamma U)
\end{cases}
\intertext{and the continuous and linear {\em tangential trace}}
\gamma_{t}\colon&
\begin{cases}
\hfill H^1(\Omega)^3&\to\quad L^{t}_{2}(\partial\Omega)\\
\hfill U&\mapsto\quad \gamma U\times n
\end{cases}\text{.}
\end{align*}
The image-spaces $V_{\pi_t}$ of $\pi_t$ and $V_{\gamma_t}$
of $\gamma_t$ are Hilbert spaces with respective norms given by
\begin{align*}
\norm[V_{\pi_t}]{v}&\coloneqq\inf\left\{\norm[H^{1/2}(\partial\Omega)^{3}]{\gamma U}:
U\in H^{1}(\Omega)^{3},\pi_{t}U=v\right\}
\intertext{and}
\norm[V_{\gamma_t}]{v}&\coloneqq\inf\left\{\norm[H^{1/2}(\partial\Omega)^{3}]{\gamma U}:
U\in H^{1}(\Omega)^{3},\gamma_{t}U=v\right\}\text{.}
\end{align*}
Integration by parts yields
\begin{equation}
\iprod[L^{t}_{2}(\partial\Omega)]{\pi_t U_1}{\gamma_t U_2}=\label{eqTangentialIntByParts}
\iprod[L_{2}(\Omega)^3]{\curll U_1}{U_2}-\iprod[L_{2}(\Omega)^3]{U_1}{\curll U_2}
\end{equation}
for $U_1,U_2\in H^{1}(\Omega)^{3}$. This implies that
(cf.~\cite[Proposition~4.3]{WS_max_scatt})
\begin{align*}
\pi_t\colon& H^1(\Omega)^3\subseteq H(\curll,\Omega)\to
V_{\gamma_t}^\prime
\intertext{and}
\gamma_t\colon& H^1(\Omega)^3\subseteq H(\curll,\Omega)\to
V_{\pi_t}^\prime
\end{align*}
are both continuous. Since $H^1(\Omega)^3$ is a dense
subset, we can uniquely extend $\pi_t$ and $\gamma_t$ to
continuous and linear functions from $H(\curll,\Omega)$ to
$V_{\gamma_t}^\prime$ and $V_{\pi_t}^\prime$ respectively.
\begin{remark}\label{remKornAndGradDomIsH1}
In~\cite{meyersSerrinGen} it is claimed that the weak and strong definition of any linear first order
differential operator with Lipschitz continuous coefficients on any open set $\Omega$ coincide.
That is to say \cite{meyersSerrinGen} extends the original Meyers-Serrin Theorem~\cite{HequalsW} to a
vast class of differential operators. 
This class obviously includes $\Gradd$. Hence, $C^\infty(\Omega)^d$ could be shown to be dense
both in $H(\Gradd,\Omega)$ and in $H^1(\Omega)^d$. Thus, Korn's second inequality (e.g.~\cite{nitscheKorn}) would immediately show
$H(\Gradd,\Omega)\simeq H^1(\Omega)^d$ for any bounded Lipschitz domain $\Omega$. Unfortunately, the authors were not able to fathom how the method of~\cite{friedrichsWeakStrong} was applied in the argumentation of~\cite{meyersSerrinGen}.

Another 
direct approach, which treats the operator $\Gradd$ (cf.\ e.g.~\cite{gsnNortonhoff} or~\cite[Chapter~7]{demengelFunSpaces}),
is to prove that the linear, bounded and one-to-one canonical embedding $\iota\colon H^1(\Omega)^d\to H(\Gradd,\Omega)$ is onto for any
bounded Lipschitz domain $\Omega$ using\footnote{Here, the partial derivatives have to be understood in the distributional sense.}
\begin{equation}
 f\in H^{-1}(\Omega)\wedge \partial_i f\in H^{-1}(\Omega)\text{ for }i=1,\dots,d\implies f\in L_2(\Omega)  \text{.}\label{eqNecasLionsThm}
\end{equation}
For $u\in H(\Gradd,\Omega)$ and $i,j,k=1,\dots,d$, we have $\partial_j u_k\in H^{-1}(\Omega)$ and
\begin{equation*}
    \partial_i\partial_j u_k=
    \partial_i \underbrace{\frac{1}{2}(\partial_k u_j+\partial_j u_k)}_{\in L_2 (\Omega)}+
    \partial_j \underbrace{\frac{1}{2}(\partial_k u_i+\partial_i u_k)}_{\in L_2 (\Omega)}-
    \partial_k \underbrace{\frac{1}{2}(\partial_i u_j+\partial_j u_i)}_{\in L_2 (\Omega)}
    \in H^{-1} (\Omega)\text{.}
\end{equation*}
Therefore,~\labelcref{eqNecasLionsThm} yields $\partial_j u_k\in L_2(\Omega)$ for $j,k=1,\dots,d$, i.e.\ $u\in H^1(\Omega)^d$.
\end{remark}
\subsection{Abstract Boundary Data Spaces}\label{Abstract Boundary Data Spaces}
Armed with the above classical traces and spaces, we now recall abstract boundary data spaces. The importance of these spaces for us cannot be understated. When formulating our own model, we will work with these abstract means instead of using the typical classical tools. In the following, let $\Omega\subseteq \R^3$ be an arbitrary open set and let the differential operators be defined
in the same way as before. Setting
\begin{align*}
H_0^1(\Omega)&\coloneqq\dom (\divv^\ast)\subseteq H^1(\Omega)\text{,}\\
H_0(\divv,\Omega)&\coloneqq\dom (\gradd^\ast)\subseteq H(\divv,\Omega)\text{,}\\
H_0(\curll,\Omega)&\coloneqq\dom (\curll^\ast)\subseteq H(\curll,\Omega)\text{,}\\
H_0(\Gradd,\Omega)&\coloneqq\dom (\Divv^\ast)\subseteq H(\Gradd,\Omega)\text{ and}\\
H_0(\Divv,\Omega)&\coloneqq\dom (\Gradd^\ast)\subseteq H(\Divv,\Omega)\text{,}
\end{align*}
we define the following \emph{abstract boundary data spaces} (cf.~\cite[Chapter~12]{ISem23}, \cite{abs_grad_div} or
\cite[Section~4.1]{Picard_2017}):
\begin{lemma}\label{BD_Def}
We have
\begin{align*}
\BD(\gradd)&\coloneqq H_0^1(\Omega)^{\perp_{H^1(\Omega)}}\\
&=\left\{u\in H^1(\Omega):\gradd u\in \dom(\divv), \divv\gradd u=u\right\}\text{,}\\
\BD(\divv)&\coloneqq H_0(\divv,\Omega)^{\perp_{H(\divv,\Omega)}}\\
&=\left\{U\in H(\divv,\Omega):\divv U\in \dom(\gradd), \gradd\divv U=U\right\}\text{,}\\
\BD(\curll)&\coloneqq H_0(\curll,\Omega)^{\perp_{H(\curll,\Omega)}}\\
&=\left\{U\in H(\curll,\Omega):\curll U\in \dom(\curll), \curll\curll U=-U\right\}\text{,}\\
\BD(\Gradd)&\coloneqq H_0(\Gradd,\Omega)^{\perp_{H(\Gradd,\Omega)}}\\
&=\left\{u\in H(\Gradd,\Omega):\Gradd u\in \dom(\Divv), \Divv\Gradd u=u\right\}\text{and}\\
\BD(\Divv)&\coloneqq H_0(\Divv,\Omega)^{\perp_{H(\Divv,\Omega)}}\\
&=\left\{U\in H(\Divv,\Omega):\Divv U\in \dom(\Gradd), \Gradd\Divv U=U\right\}\text{.}
\end{align*}
\end{lemma}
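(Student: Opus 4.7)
All five identities are instances of a single template: unwind graph-norm orthogonality by testing against smooth functions and invoke density. I would argue the $\gradd$ case in detail and then indicate that the remaining four follow the same route, the only novelty being a sign-twist for $\curll$.

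For $\gradd$: by definition of the $H^1(\Omega)$ inner product, $u\in\BD(\gradd)$ is equivalent to
\begin{equation*}
\iprod[L_2(\Omega)]{u}{v}+\iprod[L_2(\Omega)^d]{\gradd u}{\gradd v}=0\quad\text{for all } v\in H_0^1(\Omega)=\dom(\divv^*).
\end{equation*}
I would first observe that $\divv^*v=-\gradd v$ for $v\in\dom(\divv^*)$, so the graph norm of $\divv^*$ coincides with the $H^1$-norm and $\Ccinf(\Omega)$ is dense in $H_0^1(\Omega)$. Restricting the test to $\varphi\in\Ccinf(\Omega)$ reduces the orthogonality to $\iprod[L_2(\Omega)^d]{\gradd u}{\gradd\varphi}=-\iprod[L_2(\Omega)]{u}{\varphi}$, which by the very definition of the weak divergence is exactly $\gradd u\in\dom(\divv)$ together with $\divv\gradd u=u$. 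The converse implication then follows by extending this $\Ccinf(\Omega)$-identity to all of $H_0^1(\Omega)$ via the density just noted.

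The $\divv$, $\Gradd$ and $\Divv$ characterizations follow by the same argument with the roles of the two operators exchanged; for $\Gradd$ and $\Divv$ one uses that $\Ccinf(\Omega)^d$ and $\Ccinf(\Omega)^{d\times d}_{\symm}$ are cores of $\Divv^*$ and $\Gradd^*$ respectively, which is immediate from the definition of $\Gradd$ and $\Divv$ as negative adjoints of \labelcref{definSymmGrad} and \labelcref{definSymmDiv}. The $\curll$ case runs identically, with one twist: the weak $\curll$ was defined without a minus sign (its integration-by-parts formula is symmetric, so $\curll$ is formally self-adjoint), so the analogous calculation produces $\iprod[L_2(\Omega)^3]{\curll\curll u}{\varphi}=-\iprod[L_2(\Omega)^3]{u}{\varphi}$ for all $\varphi\in\Ccinf(\Omega)^3$, whence $\curll\curll u=-u$ -- accounting for the lone sign change among the five identities. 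The only genuine bookkeeping lies in confirming the density/core statements above with respect to the correct graph norms, and this is what I would identify as the main (though essentially routine) obstacle.
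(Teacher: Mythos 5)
Your argument is correct and is essentially the paper's, unpacked: the paper's one-line justification via $(\gradd\restriction_{H_0^1(\Omega)})^\ast=-\divv$ already packages your steps (restrict the orthogonality to $\Ccinf(\Omega)$, read off membership in $\dom(\divv)$ from the definition of the weak divergence, and extend back to $H_0^1(\Omega)$ by density of the core) into a single appeal to the definition of the adjoint, and your sign bookkeeping for $\curll$ is right.
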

\begin{proof}
The proofs of these identities follow immediately from the definitions of the respective orthogonal complements and
adjoints, e.g., $(\gradd\restriction_{H_0^1(\Omega)})^\ast=-\divv$.
\end{proof}
For these spaces, \Cref{BD_Def} immediately yields:
\begin{lemma}\label{BD_Unitary}
The mappings
\begin{align*}
\gradd_{\BD}&\colon
\begin{cases}
\hfill \BD(\gradd)&\to\quad \BD(\divv)\\
\hfill u&\mapsto\quad \gradd u
\end{cases}\text{,}
\intertext{}
\divv_{\BD}&\colon
\begin{cases}
\hfill \BD(\divv)&\to\quad \BD(\gradd)\\
\hfill U&\mapsto\quad \divv U
\end{cases}\text{,}
\intertext{}
\curll_{\BD}&\colon
\begin{cases}
\hfill \BD(\curll)&\to\quad \BD(\curll)\\
\hfill U&\mapsto\quad \curll U
\end{cases}\text{,}
\intertext{}
\Gradd_{\BD}&\colon
\begin{cases}
\hfill \BD(\Gradd)&\to\quad \BD(\Divv)\\
\hfill u&\mapsto\quad \Gradd u
\end{cases}
\intertext{and}
\Divv_{\BD}&\colon
\begin{cases}
\hfill \BD(\Divv)&\to\quad \BD(\Gradd)\\
\hfill U&\mapsto\quad \Divv U
\end{cases}
\end{align*}
are unitary with $\gradd_{\BD}^\ast=\divv_{\BD}$, $\curll_{\BD}^\ast=-\curll_{\BD}$ and $\Gradd_{\BD}^\ast=\Divv_{\BD}$.
\end{lemma}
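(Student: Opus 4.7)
The plan is to establish the unitarity of each mapping directly from the explicit characterisations of the boundary data spaces given in \Cref{BD_Def}, and then deduce the claimed adjoint identities from unitarity. All five statements follow the same template, so I would write the argument in detail for $\gradd_{\BD}$ and $\curll_{\BD}$ and indicate that the remaining cases are completely analogous.

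For $\gradd_{\BD}$, I would first check well-definedness: given $u\in\BD(\gradd)$, \Cref{BD_Def} provides $\gradd u\in\dom(\divv)$ with $\divv\gradd u=u\in H^1(\Omega)=\dom(\gradd)$ and $\gradd\divv\gradd u=\gradd u$, which is exactly the defining condition for $\gradd u\in\BD(\divv)$. Next I would verify the isometry property by computing, for $u,v\in\BD(\gradd)$,
\begin{equation*}
\iprod[H(\divv,\Omega)]{\gradd u}{\gradd v}=\iprod[L_2(\Omega)^d]{\gradd u}{\gradd v}+\iprod[L_2(\Omega)]{\divv\gradd u}{\divv\gradd v}=\iprod[L_2(\Omega)^d]{\gradd u}{\gradd v}+\iprod[L_2(\Omega)]{u}{v}=\iprod[H^1(\Omega)]{u}{v},
\end{equation*}
using $\divv\gradd u=u$ and $\divv\gradd v=v$. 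Surjectivity is equally transparent: for any $U\in\BD(\divv)$, set $u\coloneqq\divv U$; the identity $\gradd\divv U=U$ forces $u\in H^1(\Omega)$, and $\gradd u=U\in\dom(\divv)$ together with $\divv\gradd u=\divv U=u$ shows $u\in\BD(\gradd)$ with $\gradd_{\BD}u=U$. Hence $\gradd_{\BD}$ is unitary and its inverse coincides with $\divv_{\BD}$, giving $\gradd_{\BD}^\ast=\gradd_{\BD}^{-1}=\divv_{\BD}$.

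For $\curll_{\BD}$ the argument is similar but uses the characterisation $\curll\curll U=-U$. Well-definedness is immediate since $U\in\BD(\curll)$ yields $\curll U\in\dom(\curll)$ and $\curll\curll(\curll U)=\curll(-U)=-\curll U$. For the isometry,
\begin{equation*}
\iprod[H(\curll,\Omega)]{\curll U}{\curll V}=\iprod[L_2(\Omega)^3]{\curll U}{\curll V}+\iprod[L_2(\Omega)^3]{-U}{-V}=\iprod[H(\curll,\Omega)]{U}{V}.
\end{equation*}
For surjectivity, given $V\in\BD(\curll)$, $U\coloneqq -\curll V$ lies in $\BD(\curll)$ and satisfies $\curll U=-\curll\curll V=V$. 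Finally, to obtain $\curll_{\BD}^\ast=-\curll_{\BD}$, note that unitarity already gives $\curll_{\BD}^\ast=\curll_{\BD}^{-1}$, and the relation $\curll_{\BD}^2U=\curll\curll U=-U$ shows that $-\curll_{\BD}$ is precisely this inverse.

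The analogous statements for $\Gradd_{\BD}$ and $\Divv_{\BD}$ follow the $\gradd_{\BD}/\divv_{\BD}$ pattern verbatim, with the defining identities $\Divv\Gradd u=u$ and $\Gradd\Divv U=U$ supplying both the isometry and the explicit inverse. I do not expect a genuine obstacle here: all the heavy lifting has already been done in producing the characterisations in \Cref{BD_Def}, and the only care required is to keep track of which graph inner product is in play at each step and to verify, at each surjectivity argument, that the constructed preimage actually lies in the correct boundary data space rather than merely in the ambient graph space.
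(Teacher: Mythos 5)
Your proposal is correct and follows exactly the route the paper intends: the paper gives no written proof, stating only that \Cref{BD_Unitary} ``immediately'' follows from the characterisations in \Cref{BD_Def}, and your argument simply spells out those routine well-definedness, isometry, surjectivity, and inverse computations using $\divv\gradd u=u$, $\gradd\divv U=U$, and $\curll\curll U=-U$. No gaps.
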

We can even obtain integration-by-parts formulae for these operators (cf.\ \cite[Proposition~12.4.2]{ISem23}).
\begin{lemma}\label{lemmaBDIntByParts}
For $u\in H^1(\Omega)$ and $U\in H(\divv,\Omega)$, we have
\begin{align*}
\iprod[L_2(\Omega)]{\divv U}{u}+\iprod[L_2(\Omega)^3]{U}{\gradd u}&=\iprod[\BD(\gradd)]{\divv_{\BD}\iota^{\ast}_{\BD(\divv)}U}{\iota^{\ast}_{\BD(\gradd)}u}\\
&=\iprod[\BD(\divv)]{\iota^{\ast}_{\BD(\divv)}U}{\gradd_{\BD}\iota^{\ast}_{\BD(\gradd)}u}\text{.}
\end{align*}
Additionally, for $U,V\in H(\curll,\Omega)$, we have
\begin{align*}
\iprod[L_2(\Omega)^3]{\curll U}{V}-\iprod[L_2(\Omega)^3]{U}{\curll V}&=\iprod[\BD(\curll)]{\curll_{\BD}\iota^{\ast}_{\BD(\curll)}U}{\iota^{\ast}_{\BD(\curll)}V}\\
&=-\iprod[\BD(\curll)]{\iota^{\ast}_{\BD(\curll)}U}{\curll_{\BD}\iota^{\ast}_{\BD(\curll)}V}\text{.}
\end{align*}
Finally, for $u\in H(\Gradd,\Omega)$ and $U\in H(\Divv,\Omega)$, we have
\begin{align*}
\iprod[L_2(\Omega)^3]{\Divv U}{u}+\iprod[L_2(\Omega)^{3\times 3}_{\symm}]{U}{\Gradd u}&=\!\iprod[\BD(\Gradd)]{\Divv_{\BD}\iota^{\ast}_{\BD(\Divv)}U}{\iota^{\ast}_{\BD(\Gradd)}u}\\
&=\!\iprod[\BD(\Divv)]{\iota^{\ast}_{\BD(\Divv)}U}{\Gradd_{\BD}\iota^{\ast}_{\BD(\Gradd)}u}\text{.}
\end{align*}
\end{lemma}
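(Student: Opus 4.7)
The plan is to split each argument into its projection onto the $H_0$-type subspace and its projection onto the $\BD$-type subspace. Any cross-pairing involving an $H_0$-projection vanishes by a Green-type identity which is immediate from the adjoint relations $(\gradd\restriction_{H_0^1(\Omega)})^\ast = -\divv$, $(\divv\restriction_{H_0(\divv,\Omega)})^\ast = -\gradd$, and the analogues for the other operators. What remains is a single pairing of the two $\BD$-projections, and the characterizations in \Cref{BD_Def} are exactly what is needed to identify this pairing with the graph inner product on the corresponding $\BD$-space.

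For the first identity, I would decompose $u = u_0 + u_1$ in $H^1(\Omega) = H_0^1(\Omega) \oplus \BD(\gradd)$ and $U = U_0 + U_1$ in $H(\divv,\Omega) = H_0(\divv,\Omega) \oplus \BD(\divv)$, and expand the left-hand side into four cross terms. Since $H_0^1(\Omega) = \dom(\divv^\ast)$ with $\divv^\ast u_0 = -\gradd u_0$, the adjoint relation yields $\iprod[L_2(\Omega)]{\divv V}{u_0} + \iprod[L_2(\Omega)^3]{V}{\gradd u_0} = 0$ for every $V \in H(\divv,\Omega)$, and analogously with the roles of $u$ and $U$ swapped. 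Three of the four cross terms therefore vanish, leaving $\iprod[L_2(\Omega)]{\divv U_1}{u_1} + \iprod[L_2(\Omega)^3]{U_1}{\gradd u_1}$. Here \Cref{BD_Def} enters: $U_1 \in \BD(\divv)$ gives $\gradd\divv U_1 = U_1$, so this becomes $\iprod[L_2(\Omega)]{\divv U_1}{u_1} + \iprod[L_2(\Omega)^3]{\gradd\divv U_1}{\gradd u_1}$, which is the $H^1(\Omega)$-inner product of $\divv U_1$ and $u_1$, i.e., their $\BD(\gradd)$-inner product. Identifying $u_1 = \iota_{\BD(\gradd)}\iota_{\BD(\gradd)}^\ast u$ and $U_1 = \iota_{\BD(\divv)}\iota_{\BD(\divv)}^\ast U$ via \Cref{ISem_11.3}, and recalling that $\divv_{\BD}$ is just $\divv$ acting on $\BD(\divv)$, yields the first form. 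The second form is then immediate from $\gradd_{\BD}^\ast = \divv_{\BD}$ in \Cref{BD_Unitary}.

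The $\curll$ identity runs along the same template, with Green's identity now reading $\iprod[L_2(\Omega)^3]{\curll V}{W} - \iprod[L_2(\Omega)^3]{V}{\curll W} = 0$ whenever either $V$ or $W$ lies in $H_0(\curll,\Omega) = \dom(\curll^\ast)$, and with the $\BD(\curll)$-inner product reconstructed using $\curll\curll U_1 = -U_1$ for $U_1 \in \BD(\curll)$ from \Cref{BD_Def}; the minus sign is exactly what converts the $\iprod[L_2(\Omega)^3]{\curll\curll U_1}{\curll V_1}$ summand of the graph inner product into the required $-\iprod[L_2(\Omega)^3]{U_1}{\curll V_1}$. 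The $\Gradd/\Divv$ case is verbatim the $\gradd/\divv$ case with capital operators and with $L_2(\Omega)^{3\times 3}_{\symm}$ in place of $L_2(\Omega)^3$. I do not foresee a real obstacle: the only point requiring care is to keep track of which inner product is in play at each step and to recognize that the characterizations in \Cref{BD_Def} are engineered precisely so that the "extra" summand of the graph inner product on the $\BD$-side supplies the "missing" half of the Green pairing on the $L_2$-side.
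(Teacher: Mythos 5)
Your proposal is correct and follows essentially the same route as the paper: decompose into $H_0$ and $\BD$ orthogonal components, use the adjoint relations to eliminate all contributions involving the $H_0$ part, and then invoke \Cref{BD_Def} (e.g.\ $\gradd\divv U_1 = U_1$, $\curll\curll U_1 = -U_1$) to recognize the surviving $L_2$-pairing as a graph inner product on the relevant $\BD$-space. The only cosmetic difference is that you expand both decompositions at once and group the vanishing terms, whereas the paper substitutes the two decompositions sequentially; the substance and the supporting lemmas (\Cref{BD_Def}, \Cref{BD_Unitary}, \Cref{ISem_11.3}) are identical.
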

\begin{proof}
Consider the first case with $u\in H^1(\Omega)$ and $U\in H(\divv,\Omega)$. We can write $u=u_0+\iota^{\ast}_{\BD(\gradd)}u$, where $u_0\in H^1_0(\Omega)$, and
$U=U_0+\iota^{\ast}_{\BD(\divv)}U$, with $U_0\in H_0(\divv,\Omega)$. Using this decomposition for $U$ together with the fact that $\iprod[L_2(\Omega)]{\divv U_0}{u}=-\iprod[L_2(\Omega)^3]{U_0}{\gradd u}$, we obtain
\begin{multline}\label{eqLemmaBDIntByParts1}
    \iprod[L_2(\Omega)]{\divv U}{u}+\iprod[L_2(\Omega)^3]{U}{\gradd u}\\
    =\iprod[L_2(\Omega)]{\divv \iota^{\ast}_{\BD(\divv)}U}{u}+\iprod[L_2(\Omega)^3]{\iota^{\ast}_{\BD(\divv)}U}{\gradd u}\text{.}
\end{multline}
From here we use the decomposition for $u$ and
\begin{equation*}
\iprod[L_2(\Omega)]{\divv \iota^{\ast}_{\BD(\divv)}U}{u_0}=-\iprod[L_2(\Omega)^3]{\iota^{\ast}_{\BD(\divv)}U}{\gradd u_0}\text{,}
\end{equation*}
so that \labelcref{eqLemmaBDIntByParts1} becomes
\begin{multline}\label{eqLemmaBDIntByParts2}
    \iprod[L_2(\Omega)]{\divv U}{u}+\iprod[L_2(\Omega)^3]{U}{\gradd u}\\
    =\iprod[L_2(\Omega)]{\divv \iota^{\ast}_{\BD(\divv)}U}{\iota^{\ast}_{\BD(\gradd)}u}+
    \iprod[L_2(\Omega)^3]{\iota^{\ast}_{\BD(\divv)}U}{\gradd\iota^{\ast}_{\BD(\gradd)} u}\text{.}
\end{multline}
On account of \Cref{BD_Def} we have $\gradd\divv\iota^{\ast}_{\BD(\divv)}U=\iota^{\ast}_{\BD(\divv)}U$ so that~\labelcref{eqLemmaBDIntByParts2} now reads
\begin{equation*}
\iprod[L_2(\Omega)]{\divv U}{u}+\iprod[L_2(\Omega)^3]{U}{\gradd u}=\iprod[\BD(\gradd)]{\divv_{\BD}\iota^{\ast}_{\BD(\divv)}U}{\iota^{\ast}_{\BD(\gradd)}u}\text{.}
\end{equation*}
Finally, \Cref{BD_Unitary} ($\gradd_{\BD}=\divv_{\BD}^{\ast}$) implies the second identity.
The remaining two cases can be proven analogously.
\end{proof}
The next theorem (cf.~\cite[Corollary~12.2.3]{ISem23}) explains in which sense these abstract boundary data
spaces are an abstract version of the classical traces discussed
in \Cref{secDOClTr}.
\begin{theorem}
Let $\Omega\subseteq\R^3$ be a bounded Lipschitz domain.
Then, the restricted traces
\begin{align*}
    \gamma\restriction_{\BD(\gradd)}&\colon\BD(\gradd)\to H^{1/2}(\partial\Omega)\text{,}\\
    \gamma_{\cdot n}\restriction_{\BD(\divv)}&\colon\BD(\divv)\to H^{-1/2}(\partial\Omega)\text{,}\\
    \gamma\restriction_{\BD(\Gradd)}&\colon\BD(\Gradd)\to H^{1/2}(\partial\Omega)^{3}\text{ and}\\
    \gamma_{\cdot n}\restriction_{\BD(\Divv)}&\colon\BD(\Divv)\to H^{-1/2}(\partial\Omega)^{3}
\intertext{are continuous and bijective, and the restricted traces}
    \pi_{t}\restriction_{\BD(\curll)}&\colon\BD(\curll)\to V_{\gamma_{t}}^{\prime}\text{ and}\\
    \gamma_{t}\restriction_{\BD(\curll)}&\colon\BD(\curll)\to V_{\pi_t}^{\prime}
\end{align*}
are continuous and one-to-one.
\end{theorem}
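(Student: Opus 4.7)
The plan is to treat all six restricted trace maps along the same three axes: continuity, injectivity, and (for the four Dirichlet and Neumann cases) surjectivity. Continuity is immediate because each $\BD(\cdot)$ is, by definition, a closed subspace of the ambient graph-Hilbert space carrying the restricted norm; hence each restricted trace factors as the bounded canonical embedding composed with the continuous full trace recalled in \Cref{secDOClTr}.

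The core technical step is the identification of each kernel with the corresponding zero-trace space: $\ker\gamma\restriction_{H^1(\Omega)}=H_0^1(\Omega)$, $\ker\gamma_{\cdot n}\restriction_{H(\divv,\Omega)}=H_0(\divv,\Omega)$, and the analogues for $\Gradd$, $\Divv$, and for $\pi_t$, $\gamma_t$ on $H(\curll,\Omega)$. The scalar Dirichlet case is classical. For the scalar Neumann case, one combines~\labelcref{eqNeumannIntByParts} with the surjectivity of $\gamma$ onto $H^{1/2}(\partial\Omega)$: $U\in\ker\gamma_{\cdot n}$ iff the pairing $u\mapsto\iprod[L_2(\Omega)]{\divv U}{u}+\iprod[L_2(\Omega)^3]{U}{\gradd u}$ vanishes on all of $H^1(\Omega)$, which is precisely the characterization of $\dom(\gradd^{\ast})$. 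The symmetrized variants reduce to the scalar ones via $H(\Gradd,\Omega)\simeq H^1(\Omega)^d$ (cf.\ \Cref{remKornAndGradDomIsH1}) and the description of $H(\Divv,\Omega)$ given in \Cref{secDOClTr}. The tangential cases are handled analogously using~\labelcref{eqTangentialIntByParts} and the density of $H^1(\Omega)^3$ in $H(\curll,\Omega)$ that was already invoked in \Cref{secDOClTr} to extend $\pi_t$ and $\gamma_t$. Once each kernel is matched with $H_0(\cdot)$, injectivity of the restriction is free, since $\BD(\cdot)\cap H_0(\cdot)=\{0\}$ by \Cref{BD_Def}.

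For surjectivity in the four Dirichlet and Neumann cases, the approach is the standard direct-sum argument: given any target datum $f$, pick a preimage $U$ in the ambient space using the known surjectivity of the full trace ($\gamma$ onto $H^{1/2}(\partial\Omega)$ by definition, $\gamma_{\cdot n}$ onto $H^{-1/2}(\partial\Omega)$ by the Riesz-representation argument leading to~\labelcref{eqNeumannOntoRieszRep}, and analogously for the symmetrized versions), then decompose $U=U_0+U_1$ according to the orthogonal splitting $H_0(\cdot)\oplus\BD(\cdot)$. Since $U_0$ has vanishing trace, $U_1\in\BD(\cdot)$ achieves the same boundary datum. The tangential traces are only asserted to be one-to-one, so no separate surjectivity step is required. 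The main obstacle is the kernel identification in the Neumann and tangential cases, since this is where the integration-by-parts formulae, the onto-ness of the Dirichlet trace, and the density of $H^1(\Omega)^3$ in $H(\curll,\Omega)$ all enter in an essential way. Everything else is a routine consequence of the $\BD$-as-orthogonal-complement definition.
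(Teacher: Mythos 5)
Your proposal is correct and follows essentially the same route as the paper: identify each $H_0$-space with the kernel of the corresponding trace (using the integration-by-parts formulae and, for the Neumann cases, the surjectivity of $\gamma$), then read off injectivity from $\BD=H_0^\perp$ and surjectivity from the orthogonal decomposition together with the known ontoness of the full traces. The paper presents the same argument more tersely, obtaining the inclusion $H_0\subseteq\ker$ via density of $\Ccinf$ plus continuity rather than via your direct iff-characterization, but these are cosmetic variants of one argument.
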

\begin{proof}
In view of \Cref{secDOClTr} and the definition of the $\BD$-spaces, it suffices to show that the $H_0$-spaces are the
kernels of the respective operators. Since $\divv^\ast$ is the closure of the operator $\gradd\restriction_{\Ccinf(\Omega)}$,
and similar statements hold true for the other differential operators, the continuity of the traces show that the 
$H_0$-spaces are subsets of the respective kernels. The other inclusions easily follow from the integration by parts formulae~\labelcref{eqNeumannIntByParts},
\labelcref{eqNeumannIntByPartsSymm} and~\labelcref{eqTangentialIntByParts}.
\end{proof}
\begin{remark}
We mention that we could also make $\pi_t|_{\BD\left(\curll\right)}$ and $\gamma_t|_{\BD\left(\curll\right)}$
onto by replacing their image spaces with suitable smaller Hilbert spaces (cf.~\cite[Theorem~4.1]{BSC}).
\end{remark}
\begin{remark}\label{remBDnotGenClas}
We also stress that these abstract boundary data spaces cannot be considered as proper generalizations of the classical traces.
On the one hand (cf.~\cite[Proposition~12.4.2]{ISem23}), it turns out from an integration by parts point of view that
the suitable analogue to $\gamma_{\cdot n}$ is $\divv_{\BD}\iota_{\BD(\divv)}^\ast$ and not only
$\iota_{\BD(\divv)}^\ast$. An analogous statement holds true for $\curll$.
For a more in-depth view, see the discussion in \cite[Section~4.3.1]{Picard_2017}.
On the other hand (cf.~\cite[Proposition~12.5.3]{ISem23}), the Robin boundary condition 
$\gamma_{\cdot n} H=-\iu\gamma u$, for $H\in H(\divv,\Omega)$ and $u\in H^1(\Omega)$, is not equivalent to
$\divv_{\BD}\iota_{\BD(\divv)}^\ast H=-\iu \iota_{\BD(\gradd)}^\ast u$ (in the case of a bounded Lipschitz domain).  
 \end{remark}
\section{Boundary Conditions and Model System}\label{Boundary Conditions and Model System}
In the following let $\Omega\subseteq\R^3$ be open and non-empty and let
$\K$ stand for either $\R$ or $\C$.
\subsection{The Underlying System Equations}\label{The Underlying System Equations}
We recall the underlying system equations and material relations of thermo-piezo-electromagnetism
(cf.~\cite[Section~2]{AJM_PTW}). The basic system is made up of the equation of elasticity,
Maxwell's equations and the heat equation. We have the equation of elasticity
\begin{equation*}
\partial_{t}^2\rho_{*}u-\Divv T=F_0,
\end{equation*}
where $u\colon\R\times \Omega\to\R^3$ denotes the displacement of the elastic body, $\Omega$,
and $T\colon\R\times \Omega\to\K_{\symm}^{3\times 3}$ the stress tensor.
The function $\rho_{*}\in L_{\infty}(\Omega;\R)$ describes the density of $\Omega$, and
$F_0\colon\R\times\Omega\to\R^3$ is an external balancing force.
Assuming that \emph{Ohm's Law} holds, Maxwell's equations read
\begin{align*}
    \partial_{t}B+\curll E&=F_{2},
    \\
    \partial_{t}D-\curll H&=F_{1}-\sigma E
    ,
\end{align*}
where $E,H,B,D\colon\R\times\Omega\to\R^3$ are, respectively, the electric field, the magnetic field, the magnetic flux density and the electric displacement field. The functions $F_{1},F_{2}\colon\R\times\Omega\to\R^3$ denote given current sources whereas $\sigma\in L_{\infty}(\Omega;\R^{3\times 3})$ describes the electrical conductivity. The heat equation is
\begin{equation*}
\partial_{t}\Theta_{0}\eta+\divv q=F_{3},
\end{equation*}
where $\eta\colon\R\times\Omega\to\R$ is the entropy density, $q\colon\R\times\Omega\to\R^{3}$ describes the heat flux, $F_{3}\colon\R\times\Omega\to\R$ denotes a given external heat source and $\Theta_0\colon\Omega\to\R$, with $\Theta_0,\Theta_0^{-1}\in L_{\infty}(\Omega)$,
is the reference temperature. It is assumed that the \emph{Maxwell-Cattaneo-Vernotte modification} holds, which relates the temperature $\theta\colon\Omega\to\R$ 
and the heat flux via
\begin{equation*}
\partial_{t}\kappa_1q+\kappa_0^{-1}q+\gradd\Theta_0^{-1}\theta=0
\end{equation*}
for bijective operators $\kappa_0,\kappa_1\in L(L_2\left(\Omega\right)^3)$.
\subsection{Formulating New Boundary Conditions}\label{Formulating New Boundary Conditions}
Having recalled the underlying system equations and unknowns involved in our problem, we can now present our new boundary conditions.
We first recall the generalisation of the boundary conditions \eqref{NicOr} to the setting of abstract boundary data spaces indicated in the introduction.
In this setting the boundary conditions \eqref{NicOr} take the form (cf.\ \Cref{remBDnotGenClas})
\begin{align}\label{R2017BC}
\begin{aligned}
    \curll_{\BD}\iota_{\BD(\curll)}^*H-\curll_{\BD}Q^*\iota_{\BD(\Gradd)}^*v+\iota_{\BD(\curll)}^*E&=0,
    \\
    \Divv_{\BD}\iota_{\BD(\Divv)}^*T-Q\curll_{\BD}\iota_{\BD(\curll)}^*E+\left(1+\alpha\partial_t^{-1}\right)\iota_{\BD(\Gradd)}^*v&=0.
\end{aligned}
\end{align}
Here the given boundary mappings $\widetilde{Q}$ and $\widetilde{\alpha}$ have been
replaced by arbitrary (bounded) boundary operators
\begin{equation*}
Q\colon\BD(\curll)\to\BD(\Gradd)\quad\text{and}\quad\alpha\colon \BD(\Gradd)\to\BD(\Gradd)
\end{equation*}
respectively. In the case of a bounded Lipschitz domain,
$\widetilde{Q}$ and $\widetilde{\alpha}$ could be recovered via
\begin{align*}
    Q&\colon
\begin{cases}
\hfill \BD(\curll)&\to\quad \BD(\Gradd)\\
\hfill H&\mapsto\quad \gamma^{-1}{\widetilde{Q}}\gamma_{t}H
\end{cases}
\intertext{and}
    \alpha&\colon
\begin{cases}
\hfill \BD(\Gradd)&\to\quad \BD(\Gradd)\\
\hfill v&\mapsto\quad \gamma^{-1}{\widetilde{\alpha}}\gamma v
\end{cases}\text{.}
\end{align*}
An appropriate extension of \eqref{R2017BC} will yield a novel set of impedance boundary conditions suitable for full thermo-piezo-electromagnetic data. 
This is achieved in part by supplementing the above two equations with a new boundary equation for thermal data.
In addition the existing two equations for piezo and electromagnetic boundary data are extended by adding in thermal boundary terms. These observations are explored further in \Cref{BCmod} below.
Using \eqref{R2017BC} as the starting point, we arrive at the following set of novel boundary conditions. We present
\begin{align}\label{new_BCs}
\begin{aligned}
    \curll_{\BD}\iota_{\BD(\curll)}^*H -\curll_{\BD}Q^*\iota_{\BD(\Gradd)}^*v+\iota_{\BD(\curll)}^*E\quad\quad&\\
    +\beta\iota_{\BD(\gradd)}^*\Theta_{0}^{-1}\theta&=0,
    \\
    \Divv_{\BD}\iota_{\BD(\Divv)}^*T -Q\curll_{\BD}\iota_{\BD(\curll)}^*E+\left(1+\alpha\partial_t^{-1}\right)\iota_{\BD(\Gradd)}^*v\quad&\\
    +Q\beta\iota_{\BD(\gradd)}^*\Theta_{0}^{-1}\theta&=0,\\
-\divv_{\BD}\iota_{\BD(\divv)}^*q-\beta^*Q^*\iota_{\BD(\Gradd)}^*v-\beta^*\iota_{\BD(\curll)}^*E+\iota_{\BD(\gradd)}^*\Theta_{0}^{-1}\theta&=0
    ,
    \end{aligned}
\end{align}
where there has been introduced the arbitrary (bounded) boundary operator $\beta\colon\BD(\gradd)\to\BD(\curll)$ that once again could be traced back to an underlying
(bounded and linear) boundary mapping
${\widetilde{\beta}}\colon H^{1/2}(\partial\Omega)\to V_{\gamma_{t}}$ via
\begin{equation*}
    \beta\colon
\begin{cases}
\hfill \BD(\gradd)&\to\quad \BD(\curll)\\
\hfill u&\mapsto\quad \gamma_{t}^{-1}{\widetilde{\beta}}\gamma u
\end{cases}
\end{equation*}
in the case of a bounded Lipschitz domain.

Before coming to consider the full model with combined boundary dynamics, the following remark is offered to contextualise the modelling choices behind the abstract boundary conditions formulated above.
\begin{remark}\label{BCmod}
There are several key observations justifying this extension which we now outline.
Notice first of all that each of the original piezo-electromagnetic boundary conditions in \eqref{R2017BC} are respectively posed on $\BD\left(\curll\right)$ and $\BD\left(\Gradd\right)$. To see this, recall \autoref{BD_Unitary} and \autoref{ISem_11.3}, noting the action of the orthogonal projectors involved. As such, there needs to be an entirely new equation formed for boundary data pertaining to the thermal part of the system. This new equation needs then to be framed on $\BD\left(\gradd\right)$. Indeed the last, and entirely new, equation in \eqref{new_BCs} is posed there.

Secondly, notice that the boundary equations in \eqref{R2017BC} each involve \emph{both} of the respective unknowns for the corresponding part of the system. 
Thus, the new equation for the thermal part of the system needs to expressly involve the heat flux, $q$, and relative temperature, $\Theta_{0}^{-1}\theta$. 

Thirdly, and finally, the original boundary conditions \eqref{R2017BC} need to be modified to accommodate for, and couple with, the new thermal boundary data. Notice that a similar coupling already exists in \eqref{R2017BC} between the piezo and electromagnetic boundary data. This is on account of the underlying boundary operators $Q$, $\alpha$, and the boundary spaces they map between (again, consider the action of the orthogonal projectors involved). 
The introduction of the new boundary operator $\beta$ allows us to achieve this with the relative temperature, $\Theta_{0}^{-1}\theta$. In the first two equations of \labelcref{new_BCs} notice how $\beta$ is used to translate thermal boundary data to the realms of electromagnetic and piezo boundary data. In the latter of these cases, one also needs to make use of $Q$ to properly realise this translation.
\end{remark}
Setting
\begin{align}
\begin{aligned}
\tau_q&\coloneqq-\divv_{\BD}\iota_{\BD(\divv)}^*q\text{,}\\
\tau_H&\coloneqq\curll_{\BD}\iota_{\BD(\curll)}^*H\text{ and}\\
\tau_T&\coloneqq\Divv_{\BD}\iota_{\BD(\Divv)}^*T\text{,}\\
\end{aligned}\label{eqDefinTauBC}
\end{align}
and introducing the weight $\nu\in\R_{>0}$, we can encode the new set of boundary conditions~\labelcref{new_BCs} as the block-operator equation
\begin{equation}\label{BC_block_op}
\begin{pmatrix}
\tau_q\\
\tau_H\\
\tau_T
\end{pmatrix}
+
\begin{pmatrix}
1&-\beta^*&-\beta^* Q^*\\
\beta& 1&-\curll_{\BD}Q^*\\
Q\beta&-Q\curll_{\BD}& \left(1+\alpha\partial_{t,\nu}^{-1}\right)
\end{pmatrix}
\begin{pmatrix}
\iota_{\BD(\gradd)}^*\left(\Theta_{0}^{-1}\theta\right)\\
\iota_{\BD(\curll)}^*E\\
\iota_{\BD(\Gradd)}^*v
\end{pmatrix}
=0,
\end{equation}
which we will have recourse to use in the sequel.
\begin{remark}
As something of an aside, we conclude this subsection by pointing out that in the classical setting these new boundary conditions correspond formally to
\begin{align*}
    n\times H-n\times {\widetilde{Q}}^*v+n\times(E\times n)+{\widetilde{\beta}}\left(\Theta_{0}^{-1}\theta\right)&=0\,\,\text{on}\,\,\partial\Omega,\\
        T\cdot n-{\widetilde{Q}}\left(n\times E\right)+\left(1+{\widetilde{\alpha}}\partial_t^{-1}\right)v+{\widetilde{Q}}{\widetilde{\beta}}\left(\Theta_{0}^{-1}\theta\right)&=0\,\,\text{on}\,\,\partial\Omega,\\
        -q\cdot n+{\widetilde{\beta}}^*{\widetilde{Q}}^*v+{\widetilde{\beta}}^* (n\times(E\times n))+\Theta_{0}^{-1}\theta&=0\,\,\text{on}\,\,\partial\Omega
        .\qedhere
\end{align*}
\end{remark}
\subsection{The Model for Thermo-Piezo-Electromagnetism with Boundary Dynamics}\label{Our Model for Thermo-Piezo-Electromagnetism with Boundary Dynamics}
Armed with the novel boundary conditions of interest, we turn our attention back to the formulation of the thermo-piezo-electromagnetic model with boundary dynamics.

In order to enable material coupling to occur between the underlying system equations recalled in \Cref{The Underlying System Equations}, they need to be complemented by the additional material relations (cf.\ \cite[Section~3]{AJM_PTW} or \cite{Mindlin})
\begin{align}\label{MindlinMatRel}
\begin{split}
    T&=C\Gradd u-eE-\lambda\theta
    ,\\
    D&=e^{*}\Gradd u+\varepsilon E+p\theta
    ,\\
    B&=\mu H
    ,\\
    \eta&=\lambda^{*}\Gradd u+p^{*}E+\alpha\Theta_{0}^{-1}\theta
    .
    \end{split}
\end{align}
These material relations will also allow us to determine the form of the material law operators required in our formulation of the system as an evolutionary equation. Here, the bijective $C\in L(L_{2}(\Omega)_{\symm}^{3\times 3})$ denotes the elasticity tensor, $\varepsilon,\mu\in L(L_2(\Omega)^3)$ are respectively the permittivity and permeability, $\alpha:=\rho_{*}c\in L(L_2(\Omega))$
with the specific heat capacity $c\in L(L_2(\Omega))$.
Here, the operators $e\in L(L_2(\Omega)^3;L_{2}(\Omega)_{\symm}^{3\times 3})$, $\lambda\in L(L_2(\Omega);L_{2}(\Omega)_{\symm}^{3\times 3})$ and $p\in L(L_2(\Omega);L_2(\Omega)^{3})$ act as coupling parameters. The form of the material law operators in our model is also influenced by an additional factor, which we discuss next.

Following the methodology used in \cite{Picard_2017}, we use abstract boundary data spaces in order to formulate any boundary dynamics \emph{within} the model itself. This is done by introducing auxiliary Hilbert spaces on which to form our boundary dynamics (cf.\ \cite{abs_grad_div}). As we have three parts to our system (a thermo, a piezo and an electromagnetic part) we introduce a corresponding auxiliary Hilbert space for each of them. This point will become clear once we look at the constituent elements of our model in greater detail. To this end, consider the following lemma (cf.~\cite{abs_grad_div} or~\cite[Section~4.3.2]{Picard_2017}).
\begin{lemma}\label{lemmaExtendedColumnOp}
    We have the inclusion
\begin{equation}\label{Colgrad}
    \begin{pmatrix}
    \gradd\\
    \iota_{\BD(\gradd)}^\ast
    \end{pmatrix}^\ast\subseteq
    \begin{pmatrix}
    -\divv & 0
    \end{pmatrix}
\end{equation}
    as well as
\begin{equation}\label{Colcurl}
    \begin{pmatrix}
    \curll\\
    \iota_{\BD(\curll)}^\ast
    \end{pmatrix}^\ast\subseteq
    \begin{pmatrix}
    \curll & 0
    \end{pmatrix}
    \text{ and }
    \begin{pmatrix}
    -\Gradd\\
    \iota_{\BD(\Gradd)}^\ast
    \end{pmatrix}^\ast\subseteq
    \begin{pmatrix}
    \Divv & 0
    \end{pmatrix}\text{.}
\end{equation}
Moreover $\begin{psmallmatrix}
    \gradd\\
    \iota_{\BD(\gradd)}^\ast
    \end{psmallmatrix}^\ast$ has as its domain
\begin{equation*}
    \big\{(q,\tau_q)\in H(\divv,\Omega)\oplus \BD(\gradd):\tau_q=-\divv_{\BD}\iota_{\BD(\divv)}^*q\big\}\text{.}
\end{equation*}
Similarly $\begin{psmallmatrix}
    \curll\\
    \iota_{\BD(\curll)}^\ast
    \end{psmallmatrix}^\ast$ has as its domain
\begin{equation*}
    \big\{(H,\tau_H)\in H(\curll,\Omega)\oplus \BD(\curll):\tau_H=\curll_{\BD}\iota_{\BD(\curll)}^*H\big\}\text{,}
\end{equation*}
and $\begin{psmallmatrix}
    -\Gradd\\
    \iota_{\BD(\Gradd)}^\ast
    \end{psmallmatrix}^\ast$ has as its domain
\begin{equation*}
    \big\{(T,\tau_T)\in H(\Divv,\Omega)\oplus \BD(\Gradd):\tau_T=\Divv_{\BD}\iota_{\BD(\Divv)}^*T\big\}\text{.}
\end{equation*}
\end{lemma}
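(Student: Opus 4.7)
The plan is to unpack the defining identity of the adjoint of each column operator and exploit \Cref{lemmaBDIntByParts} to separate the volume and boundary contributions. I will carry out the first case in detail and note that the two remaining cases are entirely analogous.

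For the $\gradd$-column, suppose $(q,\tau_q)\in L_2(\Omega)^3\oplus\BD(\gradd)$ lies in the domain of the adjoint; then there exists $v\in L_2(\Omega)$ with
\begin{equation*}
\iprod[L_2(\Omega)^3]{\gradd u}{q}+\iprod[\BD(\gradd)]{\iota_{\BD(\gradd)}^\ast u}{\tau_q}=\iprod[L_2(\Omega)]{u}{v}
\end{equation*}
for every $u\in H^1(\Omega)$. First I would test with $u\in\Ccinf(\Omega)\subseteq H_0^1(\Omega)=\BD(\gradd)^{\perp_{H^1(\Omega)}}$; then $\iota_{\BD(\gradd)}^\ast u=0$, and the remaining identity is precisely the weak definition of $\divv$, giving $q\in H(\divv,\Omega)$ with $-\divv q=v$. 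Next I would return to arbitrary $u\in H^1(\Omega)$, use \Cref{lemmaBDIntByParts} to rewrite $\iprod[L_2(\Omega)^3]{\gradd u}{q}$, and cancel the $-\iprod{\divv q}{u}$ contribution with the newly identified $v$. What remains is
\begin{equation*}
\iprod[\BD(\gradd)]{\divv_{\BD}\iota_{\BD(\divv)}^\ast q+\tau_q}{\iota_{\BD(\gradd)}^\ast u}=0 \quad\text{for all } u\in H^1(\Omega).
\end{equation*}
Since $\iota_{\BD(\gradd)}^\ast$ is the orthogonal projection onto $\BD(\gradd)$ (\Cref{ISem_11.3}), $\iota_{\BD(\gradd)}^\ast u$ sweeps out all of $\BD(\gradd)$, which forces $\tau_q=-\divv_{\BD}\iota_{\BD(\divv)}^\ast q$. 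Reading \Cref{lemmaBDIntByParts} in the reverse direction shows conversely that every pair in the claimed set belongs to the domain of the adjoint, and that the adjoint acts by $(q,\tau_q)\mapsto -\divv q$. This proves both \labelcref{Colgrad} and the domain description.

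For the $\curll$-column the same three-step template applies: testing against $\Ccinf(\Omega)^3\subseteq H_0(\curll,\Omega)$ yields $V\in H(\curll,\Omega)$ and identifies the $L_2$-image as $\curll V$; then the second identity in \Cref{lemmaBDIntByParts}, together with the skew-adjointness $\curll_{\BD}^\ast=-\curll_{\BD}$ from \Cref{BD_Unitary}, produces the condition $\tau_H=\curll_{\BD}\iota_{\BD(\curll)}^\ast V$. The $\Gradd$-column case is identical in spirit, using the third identity in \Cref{lemmaBDIntByParts} and the relation $\Gradd_{\BD}^\ast=\Divv_{\BD}$ to pin down $\tau_T=\Divv_{\BD}\iota_{\BD(\Divv)}^\ast T$; the extra minus sign built into $\begin{psmallmatrix}-\Gradd\\\iota_{\BD(\Gradd)}^\ast\end{psmallmatrix}$ is precisely what makes the volume part of the adjoint equal $+\Divv$ rather than $-\Divv$, matching \labelcref{Colcurl}.

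The computations themselves are bookkeeping, so I expect the only genuine subtlety to be keeping the roles of the two ingredients straight: $\iota_{\BD(\cdot)}^\ast$ acts as an orthogonal projection onto a closed subspace of the graph Hilbert space (and therefore is surjective onto the relevant $\BD$-space, which is what lets us conclude $\tau_q=\dots$), while $\divv_{\BD}\iota_{\BD(\divv)}^\ast$ (respectively $\curll_{\BD}\iota_{\BD(\curll)}^\ast$ and $\Divv_{\BD}\iota_{\BD(\Divv)}^\ast$) is the genuine abstract trace that records boundary information, as emphasised in \Cref{remBDnotGenClas}.
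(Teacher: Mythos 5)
Your argument is correct and follows essentially the same route as the paper's: the paper establishes the inclusion~\labelcref{Colgrad} by adjoining the operator inclusion $\begin{psmallmatrix}\gradd\restriction_{H_0^1(\Omega)}\\0\end{psmallmatrix}\subseteq\begin{psmallmatrix}\gradd\\\iota_{\BD(\gradd)}^\ast\end{psmallmatrix}$ (which forces $q\in H(\divv,\Omega)$ and identifies the $L_2$-component as $-\divv q$), whereas you reach the same conclusion by testing the defining identity against $\Ccinf(\Omega)$; both then invoke \Cref{lemmaBDIntByParts} and the surjectivity of $\iota_{\BD(\gradd)}^\ast$ onto $\BD(\gradd)$ to pin down the boundary component. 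The only cosmetic imprecision is describing $\iota_{\BD(\gradd)}^\ast$ as ``the orthogonal projection onto $\BD(\gradd)$''; by \Cref{ISem_11.3} the projection in $H^1(\Omega)$ is $\iota_{\BD(\gradd)}\iota_{\BD(\gradd)}^\ast$, but what you actually use --- that $\iota_{\BD(\gradd)}^\ast$ restricts to the identity on $\BD(\gradd)$ and hence sweeps it out --- is exactly right.
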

\begin{proof}
    Adjoining the inclusion
    \begin{equation*}
    \begin{pmatrix}
    \gradd\restriction_{H_0^1(\Omega)}\\
    0
    \end{pmatrix}\subseteq
    \begin{pmatrix}
    \gradd\\
    \iota_{\BD(\gradd)}^\ast
    \end{pmatrix}
\end{equation*}
allows us to obtain~\labelcref{Colgrad}. By definition, $(q,\tau_q)\in H(\divv,\Omega)\oplus \BD(\gradd)$ is in the domain of
$\begin{psmallmatrix}
    \gradd\\
    \iota_{\BD(\gradd)}^\ast
    \end{psmallmatrix}^\ast$ 
iff
    \begin{equation*}
    \iprod[L_2(\Omega)^3]{q}{\gradd u}+\iprod[\BD (\gradd)]{\tau_q}{\iota_{\BD(\gradd)}^\ast u}= -\iprod[L_2(\Omega)^3]{\divv q}{u}
    \end{equation*}
    holds for all $u\in H^1(\Omega)$. 
Using integration by parts (\Cref{lemmaBDIntByParts}), this is equivalent to
\begin{equation*}
   \iprod[\BD (\gradd)]{\tau_q}{\iota_{\BD(\gradd)}^\ast u}= \iprod[\BD (\gradd)]{\divv_{\BD} \iota_{\BD(\divv)}^\ast q}{\iota_{\BD(\gradd)}^\ast u}
\end{equation*}
for all $u\in H^1(\Omega)$. Clearly, this yields the desired domain. The remaining two cases follow by an analogous means.
\end{proof}
The boundary data spaces appearing in \Cref{lemmaExtendedColumnOp} are precisely the auxiliary Hilbert spaces we alluded to above.
Thus, as an evolutionary equation on $L_{2,\nu}\left(\R;\HH\right)$, the model for thermo-piezo-electromagnetism with boundary dynamics is
\begin{equation*}
\left(\partial_{t,\nu}{M_{0}}^{}+{M_{1}\left(\partial_{t,\nu}\right)}^{}+{A_{}}^{}\right)
    \begin{pmatrix}
    v\\
    \begin{pmatrix}
    T\\
    \tau_{T}
    \end{pmatrix}\\
    E\\
    \begin{pmatrix}
    H\\
    \tau_{H}
    \end{pmatrix}\\
    \Theta_0^{-1}\theta\\
    \begin{pmatrix}
    q\\
    \tau_{q}
    \end{pmatrix}
    \end{pmatrix}
    =
    \begin{pmatrix}
    F_0\\
    \begin{pmatrix}
    0\\
    0
    \end{pmatrix}\\
    F_1\\
    \begin{pmatrix}
    F_2\\
    0
    \end{pmatrix}\\
    F_3\\
    \begin{pmatrix}
    0\\
    0
    \end{pmatrix}
    \end{pmatrix},
\end{equation*}
(where $v$ denotes the first time-derivative of $u$ and, as in \cite{AJM_PTW}, the temperature, $\theta$, has been replaced by the relative temperature, $\Theta_{0}^{-1}\theta$, as the unknown temperature function) with $\HH, A, M_{0}$ and $M_{1}\left(\partial_{t,\nu}\right)$ to be specified.
We are on the Hilbert space
\begin{equation}\label{H_new}
\begin{split}
\HH\coloneqq & L_2\left(\Omega\right)^3\oplus L_{2}\left(\Omega\right)_{\symm}^{3\times 3}\oplus\BD\left(\Gradd\right)\oplus\\
& L_2\left(\Omega\right)^3\oplus L_2\left(\Omega\right)^3\oplus\BD\left(\curll\right)\oplus\\
& L_2\left(\Omega\right)\oplus L_2\left(\Omega\right)^3\oplus\BD\left(\gradd\right)
\end{split}.
\end{equation}
The operator $A$ is
\begingroup\footnotesize
\begin{align}\label{A_new}
\begin{split}
&A\coloneqq\\
&\begin{pmatrix}0&-\begin{pmatrix}
-\Gradd\\\iota_{\Gradd}^*
\end{pmatrix}^*&0 &0_{1\times2} &0&0_{1\times2}\\
\begin{pmatrix}
-\Gradd\\\iota_{\Gradd}^*
\end{pmatrix}&0_{2\times2}&0_{2\times1} &0_{2\times2} &0_{2\times1} &0_{2\times2}\\
0&0_{1\times2}&0 &-\begin{pmatrix}
\curll\\\iota_{\curll}^*
\end{pmatrix}^*&0 &0_{1\times2}\\
0_{2\times1}&0_{2\times2}&\begin{pmatrix}
\curll\\\iota_{\curll}^*
\end{pmatrix} &0_{2\times2} &0_{2\times1} &0_{2\times2}\\
0&0_{1\times2}&0 &0_{1\times2} &0 &-\begin{pmatrix}
\gradd\\
\iota_{\gradd}^*
\end{pmatrix}^*\\
0_{2\times1}&0_{2\times2}&0_{2\times1} &0_{2\times2}
&\begin{pmatrix}
\gradd\\
\iota_{\gradd}^*
\end{pmatrix} &0_{2\times2}\\
\end{pmatrix}.
\end{split}
\end{align}
\endgroup

The operator $A$ encodes the purely spatial derivatives of our PDE system. On account of the extended operators recalled in \eqref{Colgrad} and \eqref{Colcurl}, $A$ also encodes the orthogonal projectors needed to isolate boundary data. It is important to note that the inherent boundary conditions \eqref{eqDefinTauBC} are present in our system implicitly via \Cref{lemmaExtendedColumnOp}.
As for the material operator $M_0$ we have
\begingroup\footnotesize
\begin{align}\label{M_0_new}
\begin{split}
&M_0\coloneqq\\
&\begin{pmatrix}
\rho_*&
0_{1\times2}
&0 
&
0_{1\times2}&0 
&0_{1\times2}
\\
0_{2\times1}
&
M_{0,33}
&
0_{2\times1}
&
M_{0,36}
&
\begin{pmatrix}
C^{-1}\lambda\Theta_0\\
0
\end{pmatrix}
&
0_{2\times2}
\\
0&
0_{1\times2}
&\varepsilon+e^*C^{-1}e
&
0_{1\times2}&p\Theta_0+e^*C^{-1}\lambda\Theta_0 &0_{1\times2}
\\
0_{2\times1}
&
{M_{0,36}}^*
&
0_{2\times1}
&
M_{0,66}
&
0_{2\times1}
&
0_{2\times2}
\\
0&
\begin{pmatrix}
\Theta_0\lambda^*C^{-1}&0
\end{pmatrix}
&\Theta_0p^*+\Theta_0\lambda^*C^{-1}e 
&
0_{1\times2}&\gamma_0+\Theta_0\lambda^*C^{-1}\lambda\Theta_0 &0_{1\times2}
\\
0_{2\times1}&
0_{2\times2}
&
0_{2\times1}
&
0_{2\times2}&
0_{2\times1}
&
M_{0,99}
\end{pmatrix},
\end{split}
\end{align}
\endgroup
(notice the introduction of the shorthand $\gamma_0\coloneqq\Theta_0\alpha$ - again see \cite{AJM_PTW}) where for notational ease we have introduced the blocks 
\begin{align*}
\begin{split}
M_{0,33}\coloneqq\begin{pmatrix}
C^{-1}&0\\
0&0
\end{pmatrix},\, & M_{0,36}\coloneqq\begin{pmatrix}
C^{-1}e&0\\
0&0
\end{pmatrix},\\
M_{0,66}\coloneqq\begin{pmatrix}
\mu&0\\
0&0
\end{pmatrix},\, &
M_{0,99}\coloneqq\begin{pmatrix}
\kappa_{1}&0\\
0&0
\end{pmatrix}.
\end{split}
\end{align*}
It is clear that $M_0$ is selfadjoint by construction. The remaining material operator, $M_1\left(\partial_{t,\nu}\right)$, is given by
\begingroup\footnotesize
\begin{align}\label{M_1_new}
\begin{split}
&M_1\left(\partial_{t,\nu}\right)\coloneqq\\
&\begin{pmatrix}
0&
0_{1\times2}
&0 
&
0_{1\times2}&0 &0_{1\times2}
\\
0_{2\times1}
&
M_{1,33}\left(\partial_{t,\nu}\right)
&
0_{2\times1}
&
M_{1,36}\left(\partial_{t,\nu}\right)
&
0_{2\times1}
&
M_{1,39}\left(\partial_{t,\nu}\right)
\\
0&
0_{1\times2}
&\sigma 
&
0_{1\times2}&0 &0_{1\times2}
\\
0_{2\times1}
&
M_{1,63}\left(\partial_{t,\nu}\right)
&
0_{2\times1}
&
M_{1,66}\left(\partial_{t,\nu}\right)
&
0_{2\times1}
&
M_{1,69}\left(\partial_{t,\nu}\right)
\\
0&
0_{1\times2}
&0 
&
0_{1\times2}&0 &0_{1\times2}
\\
0_{2\times1}&
M_{1,93}\left(\partial_{t,\nu}\right)
&0_{2\times1}
&
M_{1,96}\left(\partial_{t,\nu}\right)&0_{2\times1}
&
M_{1,99}\left(\partial_{t,\nu}\right)
\end{pmatrix},
\end{split}
\end{align}
\endgroup
where, for $i,j\in\left\{3,6,9\right\},\left(i,j\right)\neq9$, we have introduced the block-operators
\begin{equation}\label{M_{1,ij}}
    M_{1,ij}\left(\partial_{t,\nu}\right)\coloneqq\begin{pmatrix}
0&0\\
0&K_{ij}\left(\partial_{t,\nu}\right)
\end{pmatrix},
\end{equation}
and for the case $i=j=9$,
\begin{equation}\label{M_{1,99}}
    M_{1,99}\left(\partial_{t,\nu}\right)\coloneqq\begin{pmatrix}
\kappa_{0}^{-1}&0\\
0&K_{99}\left(\partial_{t,\nu}\right)
\end{pmatrix},
\end{equation}
with the specific operator coefficients $K_{ij}\left(\partial_{t,\nu}\right)$ to be specified shortly. We first point out that in our PDE system, $M_{0}$ and $M_{1}\left(\partial_{t,\nu}\right)$ encode the underlying constitutive relations behind the physics of our problem. This is done with the material coupling of \eqref{MindlinMatRel}.

Upon recalling the block-operator formulation of our boundary equations, \labelcref{BC_block_op}, we can first compute and then apply the inverse\footnote{Using \Cref{6.2.3_b}, we prove the invertibility
for large enough $\nu$ in \labelcref{proofInvBDMatrix}.}
to instead equivalently consider
\begin{equation*}
\begin{pmatrix}
1&-\beta^*&-\beta^* Q^*\\
\beta& 1&-\curll_{\BD}Q^*\\
Q\beta&-Q\curll_{\BD}& \left(1+\alpha\partial_{t,\nu}^{-1}\right)
\end{pmatrix}^{-1}
\begin{pmatrix}
\tau_T\\
\tau_H\\
\tau_q
\end{pmatrix}
+
\begin{pmatrix}
\iota_{\BD(\Gradd)}^*v\\
\iota_{\BD(\curll)}^*E\\
\iota_{\BD(\gradd)}^*\left(\Theta_{0}^{-1}\theta\right)
\end{pmatrix}
=0.
\end{equation*}
The computed inverse
\begin{equation*}
\begin{pmatrix}
1&-\beta^*&-\beta^* Q^*\\
\beta& 1&-\curll_{\BD}Q^*\\
Q\beta&-Q\curll_{\BD}& \left(1+\alpha\partial_{t,\nu}^{-1}\right)
\end{pmatrix}^{-1}=
\begin{pmatrix}
K_{99}\left(\partial_{t,\nu}\right)&K_{96}\left(\partial_{t,\nu}\right)&K_{93}\left(\partial_{t,\nu}\right)\\
K_{69}\left(\partial_{t,\nu}\right)&K_{66}\left(\partial_{t,\nu}\right)&K_{63}\left(\partial_{t,\nu}\right)\\
K_{39}\left(\partial_{t,\nu}\right)&K_{36}\left(\partial_{t,\nu}\right)&K_{33}\left(\partial_{t,\nu}\right)
\end{pmatrix}
\end{equation*}
has for diagonal coefficients
\begin{align*}
\begin{split}    &K_{33}\left(\partial_{t,\nu}\right)=\Big(1+Q\beta\left(Q\beta\right)^*+\alpha\partial_{t,\nu}^{-1}-\left[Q\beta\beta^*-Q\curll_{\BD}\right]\left(1+\beta\beta^*\right)^{-1}\\&\quad\cdot\left[\beta\left(Q\beta\right)^*-Q\curll_{\BD}\right]\Big)^{-1}
        ,
    \end{split}\\
    \begin{split}    &K_{66}\left(\partial_{t,\nu}\right)=\left(1+\beta\beta^*\right)^{-1}+\left(1+\beta\beta^*\right)^{-1}\left[\beta\left(Q\beta\right)^*-\curll_{\BD}Q^*\right]K_{33}\left(\partial_{t,\nu}\right)\\&\quad\cdot\left[Q\beta\beta^*-Q\curll_{\BD}\right]\left(1+\beta\beta^*\right)^{-1}
        ,
    \end{split}\\
    \begin{split}
        &K_{99}\left(\partial_{t,\nu}\right)=1+\left[-\beta^*\left(1+\beta\beta^*\right)^{-1}\beta+\left[\beta^*\left(1+\beta\beta^*\right)^{-1}\left[\beta\left(Q\beta\right)^*-\curll_{\BD}Q^*\right]\right.\right.\\&\left.\left.\quad-\left(Q\beta\right)^*\right]K_{33}\left(\partial_{t,\nu}\right)\left[Q\beta-\left[Q\beta\beta^*-Q\curll_{\BD}\right]\left(1+\beta\beta^*\right)^{-1}\beta\right]\right],
    \end{split}
\end{align*}
and for off-diagonal coefficients
\begin{align*}
    \begin{split}
         &K_{96}\left(\partial_{t,\nu}\right)=-\left[\left[\left(Q\beta\right)^*-\beta^*\left(1+\beta\beta^*\right)^{-1}\left[\beta\left(Q\beta\right)^*-\curll_{\BD}Q^*\right]-\beta^*\right]
         \right.
         \\&\quad\left.\cdot K_{33}\left(\partial_{t,\nu}\right)\left[Q\beta\beta^*-Q\curll_{\BD}\right]\right]\left(1+\beta\beta^*\right)^{-1},
         \end{split}\\
         \begin{split}
         &K_{69}\left(\partial_{t,\nu}\right)=-\left(1+\beta\beta^*\right)^{-1}\left[\beta-\left[\beta\left(Q\beta\right)^*-\curll_{\BD}Q^*\right]
         \right.
         \\&\quad\left.\cdot K_{33}\left(\partial_{t,\nu}\right)\left[Q\beta-\left[Q\beta\beta^*-Q\curll_{\BD}\right]\left(1+\beta\beta^*\right)^{-1}\beta\right]\right],
    \end{split}
\end{align*}
and
\begin{align*}
    &K_{93}\left(\partial_{t,\nu}\right)=-\left[\beta^*\left(1+\beta\beta^*\right)^{-1}\left[\beta\left(Q\beta\right)^*-\curll_{\BD}Q^*\right]-\left(Q\beta\right)^*\right]K_{33}\left(\partial_{t,\nu}\right),\\
        &K_{39}\left(\partial_{t,\nu}\right)=-K_{33}\left(\partial_{t,\nu}\right)\left[Q\beta-\left[Q\beta\beta^*-Q\curll_{\BD}\right]\left(1+\beta\beta^*\right)^{-1}\beta\right],
\end{align*}
as well as
\begin{align*}
        &K_{63}\left(\partial_{t,\nu}\right)=-\left(1+\beta\beta^*\right)^{-1}\left[\beta\left(Q\beta\right)^*-\curll_{\BD}Q^*\right]K_{33}\left(\partial_{t,\nu}\right),\\
        &K_{36}\left(\partial_{t,\nu}\right)=-K_{33}\left(\partial_{t,\nu}\right)\left[Q\beta\beta^*-Q\curll_{\BD}\right]\left(1+\beta\beta^*\right)^{-1},
\end{align*}
where we have used the skew-symmetry of $\curll_{\BD}$  (see \autoref{BD_Unitary}). 
With these entries computed, the actual form of $M_{1}\left(\partial_{t,\nu}\right)$ is fully realised.
\begin{remark}
    The additional zeros appearing in the block-operators \eqref{A_new}, \eqref{M_0_new} and \eqref{M_1_new} arise on account of encoding the boundary dynamics within the system itself. In particular, the increase in dimension is incurred by the construction \eqref{eqDefinTauBC} respectively by \Cref{lemmaExtendedColumnOp}.
\end{remark}
\begin{remark}\label{remWhyIsM1MatOp}
Formally replacing $z$ by $\partial_{t,\nu}$ in the definition of $M_1(\partial_{t,\nu})$, we
get the material law $M_1(z)$ with $s_b(M_1)$ being bounded above by 
$\norm{\alpha}$ (cf.~\labelcref{proofInvBDMatrix}).
Using the definition of material operators and \Cref{lemmaTDMultOpLapTrafo}, we easily get that
$M_1(\partial_{t,\nu})$ indeed is the material operator stemming from the material law $M_1(z)$.
\end{remark}
\subsection{Evolutionary Well-Posedness of the Model}\label{Evolutionary Well-Posedness of the Model}
With the above preparations to hand, the main well-posedness result of this paper can now be presented and proven.
\begin{theorem}\label{model_well-posedness}
Let $\nu\in\R_{>0}$ and $z\in\C_{\Real >\nu}$. Let $\Omega\subseteq\R^{d}$ be open and $\HH$ as in \eqref{H_new}. Additionally, let ${M}_{0},\,{M}_{1}(z)\in L(\HH)$ be as in \eqref{M_0_new} and \eqref{M_1_new}, respectively, and ${A}$ as in \eqref{A_new}. Furthermore, introduce the notation
\begin{gather*}
{m_{0,55}}\coloneqq\gamma_0-\Theta_{0}\lambda^{*}C^{-1}e\left(\mu-e^*C^{-1}e\right)^{-1}e^{*}C^{-1}\lambda\Theta_{0}\text{ and}\\
{m_{0,44}}\coloneqq\varepsilon+e^{*}C^{-1}e-\left(p\Theta_0+e^*C^{-1}\lambda\Theta_0\right)^{*}\left({m_{0,55}}\right)^{-1}\left(p\Theta_0+e^*C^{-1}\lambda\Theta_0\right).
\end{gather*}
Assume 
$\rho_{*},\,\varepsilon,\,\mu,\,C\text{ and }\gamma_{0}$
are each selfadjoint and non-negative. Moreover, assume $\rho_{*},\,C,\,{m_{0,55}}\gg0$,
as well as
\begin{equation*}
    \mu-e^{*}C^{-1}e,\, \nu\,{m_{0,44}}+\sigma,\,
    \nu\kappa_{1}+\kappa_{0}^{-1}\gg0,
\end{equation*}
for large enough $\nu\in\R_{>0}$.
Then, for all $\nu\in\R_{>0}$ sufficiently large, the operator
\begin{equation*}\label{model_wp_FULL_system_new_the_op}
\partial_{t,\nu}{M}_{0}+{M}_{1}(\partial_{t,\nu})+{A}
\end{equation*}
is densely defined and closable in $L_{2,\nu}(\R;\HH)$. The respective closure is continuously invertible with causal inverse being eventually independent of $\nu$.
\end{theorem}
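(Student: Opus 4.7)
The proof plan is to apply Picard's theorem after recasting the equation in the standard form. Writing
\[
\partial_{t,\nu}M_{0}+M_{1}(\partial_{t,\nu})+A=\partial_{t,\nu}M(\partial_{t,\nu})+A,\qquad M(z)\coloneqq M_{0}+z^{-1}M_{1}(z),
\]
it suffices to verify (i) that $A$ is skew-selfadjoint on $\HH$, (ii) that $M$ is a material law, and (iii) that $\Real zM(z)\geq c$ for some $c>0$ and all $z\in\C$ with $\Real z$ large enough. Picard's theorem then yields closability, continuous invertibility of the closure, causality, and eventual $\nu$-independence of the inverse.

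Item (i) follows at once from \Cref{lemmaExtendedColumnOp}: each extended column operator $B=\begin{psmallmatrix}D\\ \iota^{*}\end{psmallmatrix}$ with $D\in\{-\Gradd,\curll,\gradd\}$ is densely defined and closed, with adjoint identified in that lemma. Consequently each $2\times2$ meta-block of $A$ has the classical skew-adjoint shape $\begin{psmallmatrix}0&-B^{*}\\ B&0\end{psmallmatrix}$, and $A$, being their orthogonal sum, is skew-selfadjoint.

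For (ii), since $M_{0}\in L(\HH)$ is bounded and selfadjoint by inspection, the only non-obvious point is invertibility with uniform bound of the $3\times3$ boundary matrix
\[
B(z)\coloneqq\begin{pmatrix}1&-\beta^{*}&-\beta^{*}Q^{*}\\ \beta&1&-\curll_{\BD}Q^{*}\\ Q\beta&-Q\curll_{\BD}&1+\alpha z^{-1}\end{pmatrix}
\]
on some right half-plane, as announced in \Cref{remWhyIsM1MatOp}. Using $\curll_{\BD}^{*}=-\curll_{\BD}$ from \Cref{BD_Unitary}, a direct computation shows that every off-diagonal block of $B(z)+B(z)^{*}$ cancels, leaving $\operatorname{diag}\bigl(2,\,2,\,2+\alpha z^{-1}+(\alpha z^{-1})^{*}\bigr)$. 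This is bounded below by a positive constant for $\Real z>\norm{\alpha}$, so \Cref{6.2.3_b} supplies the required bounded $B(z)^{-1}$, whence $M$ is a material law with $s_{b}(M)\leq\norm{\alpha}$.

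The main obstacle is the positivity estimate~(iii). Writing $z=\iu\tau+\nu$ and using $M_{0}^{*}=M_{0}$ gives $\Real zM(z)=\nu M_{0}+\Real M_{1}(z)$, a selfadjoint operator. Inspection of~\eqref{M_0_new} and~\eqref{M_1_new} shows that it is block-diagonal with respect to the orthogonal splitting of $\HH$ into its volume factors (the $L_{2}$-spaces) and its boundary factors (the $\BD$-spaces), so positivity may be verified on each part separately. On the boundary part, $\nu M_{0}$ vanishes and $\Real M_{1}(z)=\Real B(z)^{-1}$; the second assertion of \Cref{6.2.3_b} together with the uniform boundedness of $\norm{B(z)}$ for $\Real z\geq\nu_{0}$ (controlled by $\norm{\beta}$, $\norm{Q}$ and $\norm{\alpha}/\nu_{0}$, using $\norm{\curll_{\BD}}=1$) yields a positive lower bound independent of $\nu$. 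On the volume part, $\Real M_{1}$ contributes only $\Real\sigma$ on the $E$-component and $\Real\kappa_{0}^{-1}$ on the $q$-component, while $\nu M_{0}$ carries the coupled piezo-electromagnetic-thermal sub-matrix. Positivity is then obtained by a chain of Schur-complement reductions: the decoupled $v$-block is handled by $\nu\rho_{*}\gg0$ and the decoupled $q$-block by $\nu\kappa_{1}+\kappa_{0}^{-1}\gg0$; on the remaining coupled sub-matrix, one eliminates $T$ using $\nu C^{-1}\gg0$ (since $C\gg0$), then $H$ using the Schur-complement entry $\nu(\mu-e^{*}C^{-1}e)\gg0$, then $\theta$ using $\nu m_{0,55}\gg0$, leaving the final $E$-entry $\nu m_{0,44}+\sigma\gg0$. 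Choosing $\nu$ large enough that all these lower bounds hold simultaneously produces the constant $c>0$ required by Picard's theorem, whose application completes the proof.
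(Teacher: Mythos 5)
Your proof follows the paper's own argument step by step: appeal to Picard's Theorem, establish skew-selfadjointness of $A$ via \Cref{lemmaExtendedColumnOp}, verify that the real part of the boundary matrix $B(z)$ is $\operatorname{diag}(1,1,\Real(1+\alpha z^{-1}))$ using $\curll_{\BD}^{*}=-\curll_{\BD}$ and then invoke \Cref{6.2.3_b}, and finally work through the same Schur-complement chain on the volume part (eliminating $T$, then $H$, then $\theta$, leaving $\nu m_{0,44}+\sigma$) to reduce to precisely the positivity hypotheses of the theorem. This matches the paper's proof, with your version being a little more explicit about the uniform boundedness of $\norm{B(z)}$ on the relevant half-plane.
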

\begin{proof}
The assertion will follow from applying \hyperlink{Picard}{Picard's Theorem}
to the material law (cf.\ \Cref{remWhyIsM1MatOp})
\begin{equation*}
 M(z)\coloneqq{M}_{0}+z^{-1}{M}_{1}\left(z\right)   
\end{equation*}
and spatial operator ${A}$. As already noted in \Cref{Our Model for Thermo-Piezo-Electromagnetism with Boundary Dynamics}, it is clear that ${A}$ is skew-selfadjoint and $M_{0}$ selfadjoint by construction. As such, we need only establish
\begin{equation*}
     zM_{0}+\Real M_{1}\left(z\right)\gg0
\end{equation*}
uniformly in $z\in\C_{\Real\ge\nu}$ for large enough $\nu\in\R_{>0}$. 
An elementary first permutation yields the congruence
\begin{equation*}
\nu M_0+\Real M_1\left(z\right)\sim\nu\, {\NN}+\Real {\MM}\left(z\right)
\end{equation*}
where
\begin{equation*}
\NN\coloneqq
\begin{pmatrix}
\rho_*&0&0&0\\
0&\NN\,^{\prime}&0&0\\
0&0&0_{3\times3}&0\\
0&0&0&\kappa_{1}
\end{pmatrix},\quad
\MM\left(z\right)\coloneqq
    \begin{pmatrix}
0&0&0&0\\
0&\MM^{\prime}&0&0\\
0&0&\KK\left(z\right)&0\\
0&0&0&\kappa_{0}^{-1}
\end{pmatrix}
\end{equation*}
and where
\begin{equation*}
    \NN\,^{\prime}\coloneqq
\begin{pmatrix}
\varepsilon+e^*C^{-1}e
&
0
&
0
&
p\Theta_0+e^*C^{-1}\lambda\Theta_0
\\
0
&C^{-1}
&
C^{-1}e&C^{-1}\lambda\Theta_0
\\
0
&
e^*C^{-1}
&
\mu
&
0
\\
\Theta_0p^*+\Theta_0\lambda^*C^{-1}e 
&
\Theta_0\lambda^*C^{-1}
&
0
&\gamma_0+\Theta_0\lambda^*C^{-1}\lambda\Theta_0
\\
\end{pmatrix}
\end{equation*}
together with 
\begin{equation*}
    \MM^{\,'}\coloneqq
    \begin{pmatrix}
        \sigma&0_{3\times1}\\
        0_{1\times3}&0_{3\times3}
    \end{pmatrix}\text{ and }
    \KK\left(z\right)\coloneqq
    \begin{pmatrix}
        K_{33}\left(z\right)&K_{36}\left(z\right)&K_{39}\left(z\right)\\
        K_{63}\left(z\right)&K_{66}\left(z\right)&K_{69}\left(z\right)\\
        K_{93}\left(z\right)&K_{96}\left(z\right)&K_{99}\left(z\right)
    \end{pmatrix}.
\end{equation*}
It suffices to check the positive-definiteness condition for the block-operators $\nu\NN\,^{\prime}+\Real\MM^{\prime}$ and $\KK\left(z\right)$ alone. Starting with the former of these blocks, on account of a second permutation and a subsequent symmetric Gauss step (which isolates $C^{-1}$ on the leading diagonal), we need only consider the sub-block operator
\begin{equation*}
    \nu
    \begin{pmatrix}
\varepsilon+e^*C^{-1}e
&
0&p\Theta_0+e^*C^{-1}\lambda\Theta_0
\\
0
&
\mu-e^*C^{-1}e
&
-e^{*}C^{-1}\lambda\Theta_{0}
\\
\Theta_0p^*+\Theta_0\lambda^*C^{-1}e 
&
-\Theta_{0}\lambda^{*}C^{-1}e
&\gamma_0
\end{pmatrix}
    +
    \begin{pmatrix}
        \sigma&0&0\\
        0&0&0\\
        0&0&0\\
    \end{pmatrix}.
\end{equation*}
A third permutation yields the congruent operator
\begin{equation*}
    \nu
    \begin{pmatrix}
\mu-e^*C^{-1}e&0&-e^{*}C^{-1}\lambda\Theta_{0}\\
0&\varepsilon+e^*C^{-1}e&p\Theta_0+e^*C^{-1}\lambda\Theta_0\\
-\Theta_{0}\lambda^{*}C^{-1}e&\Theta_0p^*+\Theta_0\lambda^*C^{-1}e&\gamma_0
\end{pmatrix}
    +
    \begin{pmatrix}
        0&0&0\\
        0&\sigma&0\\
        0&0&0\\
    \end{pmatrix}
\end{equation*}
which, under a subsequent pair of symmetric Gauss steps, is itself congruent to the operator
\begin{equation*}
    \nu
    \begin{pmatrix}
\mu-e^*C^{-1}e&0&0\\
0&{m_{0,44}}&0\\
0&0&{m_{0,55}}
\end{pmatrix}
    +
    \begin{pmatrix}
        0&0&0\\
        0&\sigma&0\\
        0&0&0\\
    \end{pmatrix}
\end{equation*}
which is positive-definite by assumption. As for the remaining block-operator, $\KK\left(z\right)$, we will use \Cref{6.2.3_b} to indirectly establish the desired property.
First of all, for $x\in\BD\left(\Gradd\right)$ compute
\begin{align*}
    \left\langle x,1+\Real\left(\alpha z^{-1}\right)x\right\rangle_{\BD\left(\Gradd\right)}&=\|x\|_{\BD\left(\Gradd\right)}^2+\left\langle x,\Real\left(\alpha z^{-1}\right)x\right\rangle_{\BD\left(\Gradd\right)}\\
    &=\|x\|_{\BD\left(\Gradd\right)}^2+\Real\left\langle x,\alpha z^{-1}x\right\rangle_{\BD\left(\Gradd\right)}\\
    &\ge\|x\|_{\BD\left(\Gradd\right)}^2-\left\|\alpha\right\|| z^{-1}|\|x\|_{\BD\left(\Gradd\right)}^2\\
    &\geq\left(1-\frac{\|\alpha\|}{\nu}\right)\|x\|_{\BD\left(\Gradd\right)}^2\text{.}
\end{align*}
We then compute
\begin{align}
\begin{aligned}\label{proofInvBDMatrix}
\Real    \begin{pmatrix}
1&-\beta^*&-\beta^* Q^*\\
\beta& 1&-\curll_{\BD}Q^*\\
Q\beta&-Q\curll_{\BD}& \left(1+\alpha z^{-1}\right)
\end{pmatrix}
&=
\begin{pmatrix}
1&0&0\\
0& 1&0\\
0&0& \Real\left(1+\alpha z^{-1}\right)
\end{pmatrix}
\\&\ge\min\left\{1,1-\frac{\|\alpha\|}{\nu}\right\}\\
&=1-\frac{\|\alpha\|}{\nu}.
\end{aligned}
\end{align}
By \autoref{6.2.3_b} we can use this to estimate the real-part of the remaining block-operator occurring in the congruent form above. Indeed, we then have
\begin{align*}
&\Real\begin{pmatrix}
1&-\beta^*&-\beta^* Q^*\\
\beta& 1&-\curll_{\BD}Q^*\\
Q\beta&-Q\curll_{\BD}& \left(1+\alpha z^{-1}\right)
\end{pmatrix}^{-1}\\
\ge&\left(1-\frac{\|\alpha\|}{\nu}\right)\left\|\begin{pmatrix}
1&-\beta^*&-\beta^* Q^*\\
\beta& 1&-\curll_{\BD}Q^*\\
Q\beta&-Q\curll_{\BD}& \left(1+\alpha z^{-1}\right)
\end{pmatrix}\right\|^{-2}
\end{align*}
yielding the desired positive-definiteness of the system.
\end{proof}
\begin{remark}
The application of the indicated permutations as congruence transforms in the proof above is necessary to retain the possibility of an \emph{eddy-current approximation} (see Remark 2.1 in \cite{AJM_PTW}). Put succinctly, the eddy-current approximation allows us to accommodate for the limit case
\begin{equation*}
    \varepsilon=\left(p\Theta_0+e^*C^{-1}\lambda\Theta_0\right)^{*}\left({m_{0,55}}\right)^{-1}\left(p\Theta_0+e^*C^{-1}\lambda\Theta_0\right)-e^{*}C^{-1}e,
\end{equation*}
provided that $\sigma$ is large enough to compensate. Were one not to intermittently permute the system as done in the above proof - and instead solely apply sequential symmetric Gauss steps as congruence transforms - then one might arrive at a sub-block operator of the form
\begin{equation*}
    \nu
    \begin{pmatrix}
\varepsilon+e^*C^{-1}e&0&0\\
0&\mu-e^*C^{-1}e&0\\
0&0&{\gamma_0}^{\prime}
\end{pmatrix}
    +
    \begin{pmatrix}
        \sigma&0&0\\
        0&0&0\\
        0&0&0\\
    \end{pmatrix}
\end{equation*}
where, besides needing to additionally assume $\varepsilon+e^{*}C^{-1}e$ invertible, there arises the term
\begin{align*}
\begin{split}
{\gamma_{0}}^{\prime}&\coloneqq
\gamma_0-\left(e^*C^{-1}\lambda\Theta_0\right)^{*}\left(\mu-e^*C^{-1}e\right)^{-1}e^*C^{-1}\lambda\Theta_0\\
&\quad-\left(p\Theta_0+e^*C^{-1}\lambda\Theta_0\right)^{*}\left(\varepsilon+e^*C^{-1}e\right)^{-1}\left(p\Theta_0+e^*C^{-1}\lambda\Theta_0\right).
\end{split}
\end{align*}
In this alternative formulation it is still possible to choose the operator $\varepsilon$ to be close to $-e^{*}C^{-1}e$, however the eddy-current approximation $\varepsilon=-e^{*}C^{-1}e$ is excluded. 
\qedhere
\end{remark}
\section*{Acknowledgements}
Author b would like to acknowledge the support provided by the Engineering and Physical Sciences Research Council in preparing this work. 

\end{document}